\newcommand{\field}[1]{\mathbb{#1}}
\newcommand{\C}{\field{C}}
\newtheorem{defi}{Definition}[section]
\newtheorem{lem}[defi]{Lemma}
\newtheorem{theo}[defi]{Theorem}
\newtheorem{co}[defi]{Corollary}
\newtheorem{re}[defi]{Remark}
\font\tenmsy=msbm10
\def\Bbb#1{\hbox{\tenmsy#1}} 
\subjclass{14 A 10, 14 R 10, 51 M 99}
\title[On  finite regular and holomorphic mappings]{On  finite regular and holomorphic mappings} \makeatletter
\author{Zbigniew Jelonek}
\address[Z. Jelonek]{Instytut Matematyczny\\
Polska Akademia Nauk\\
\'Sniadeckich 8, 00-956 Warszawa, Poland}
\email{najelone@cyf-kr.edu.pl}
\thanks{The  author was partially supported by the NCN
 grant 2014-2017}
\date{\today}
\begin{document}

\maketitle

\begin{abstract}{Let $X, Y$ be smooth algebraic varieties of the same dimension.
Let $f, g : X \longrightarrow Y$ be finite regular mappings. We
say that $f, g$ are equivalent if there exists a regular
automorphism $\Phi \in \operatorname{Aut}(X)$ such that $f = g
\circ\Phi$. Of course if $f, g$ are equivalent, then they have the
same discriminants (i.e., the same set of critical values) and the
same geometric degree. We show that conversely, for every
hypersurface $V\subset Y$ and every $k\in \Bbb N,$ there are only
a finite number of non-equivalent finite regular mappings $f : X
\rightarrow Y$ such that the discriminant $D(f)$ equals $V$ and
$\mu(f ) = k$. As one of applications we show  that if  $f: X\to
Y$ is a finite mapping of topological degree two, then there
exists a regular automorphism $\Phi: X\to X$ which acts
transitively on the fibers of $f$ and $Y=X/G$, where $G=\{ id,
\Phi\}$ and $f$ is equivalent to the canonical projection $X\to
X/G.$

We prove the same statement in the local (and sometimes global)
holomorphic situation. In particular we show that if $f : (\mathbb
C^{n} , 0) \rightarrow (\mathbb C^{n} , 0)$ is a proper and
holomorphic mapping of topological degree two, then there exist
biholomorphisms $\Psi,\Phi : (\mathbb C^{n}  , 0) \rightarrow
(\mathbb C^{n} , 0)$ such that $\Psi\circ f\circ\Phi(x_{1} , x_{2}
, \dots, x_{n} ) = (x_{1}^{2} , x_{2} ,\dots, x_{n})$. Moreover,
for every proper holomorphic mapping $f : (\mathbb C^{n} , 0)
\rightarrow (\mathbb C^{n} , 0)$ with smooth discriminant there
exist biholomorphisms $\Psi,\Phi : (\mathbb C^{n} , 0) \rightarrow
(\mathbb C^{n}, 0)$ such that $\Psi\circ f\circ\Phi(x_{1} , x_{2}
, \dots, x_{n} ) = (x_{1}^{k} , x_{2} ,\dots, x_{n})$, where $k =
\mu(f )$. }
\end{abstract}

\section{Introduction}

\subsection{Global case}
Let $X,Y$ be smooth algebraic (resp. holomorphic) varieties of the
same dimension. We say that a regular (resp. holomorphic) mapping
$f: X\to Y$ is {\em finite}, if it is proper and it has finite
fibers. If $X,Y$ are affine (resp. Stein) then every proper
mapping is finite. Let $f, g:X\to Y$ be finite regular (resp.
holomorphic) mappings. By the {\em discriminant} $D(f)$ of $f$ we
mean the set of critical values of $f.$ Of course $D(f)$ is a
hypersurface in $Y.$ For every $y\in U=Y\setminus D(f)$ the fiber
$f^{-1}(y)$ has exactly $\mu(f)$ points, where $\mu(f)$ is the
topological degree of $f.$ In the case $X=Y=\C^2,$ S. Lamy
\cite{lam} (see also \cite{b-c} and \cite{b-c-1}) proved the
following theorem:

\begin{theo}\label{lamy} Let $f : \C^2\to  \C^2$ be a finite polynomial map of
topological  degree $2$. Then there exist polynomial automorphisms
$\Phi_1, \Phi_2$  such that $f = \Phi_2 \circ g \circ \Phi_1$,
where  $g(x, y) = (x, y^2).$
\end{theo}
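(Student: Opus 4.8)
The plan is to realise $f$ as the quotient of $\C^2$ by a regular involution and then to linearise that involution. First, since $f$ is finite it is dominant, so $f^{*}$ presents $\C(\C^2)$ as an extension of $f^{*}\C(\C^2)$ of degree equal to the topological degree $\mu(f)=2$. In characteristic zero every degree-two field extension is Galois, so its Galois group is $\{\operatorname{id},\sigma\}$, where $\sigma$ is a birational self-map of $\C^2$ with $f\circ\sigma=f$ and $\sigma^{2}=\operatorname{id}$ as rational maps; moreover $\sigma\neq\operatorname{id}$, and since $f$ is a $\Z/2$-torsor over the complement of its discriminant, $\sigma$ swaps the two sheets of $f$ there.

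Next I would show that $\sigma$ is in fact a regular automorphism. Write $X=Y=\C^2$ and let $Z\subseteq X\times_{Y}X$ be the closure of the graph of $\sigma$ (it does lie in the fibre product because $f\circ\sigma=f$ forces the graph into $\{(p,q):f(p)=f(q)\}$). Then $Z$ is an irreducible surface, stable under the transposition $(p,q)\mapsto(q,p)$, and not contained in the diagonal. Because $f$ is finite, the fibre product $X\times_{Y}X$ is finite over each of its two factors, hence so is its closed subvariety $Z$; thus both projections $Z\to X$ are finite. They are also birational: for generic $p$ the fibre $f^{-1}(f(p))$ is exactly $\{p,\sigma(p)\}$ with $p\neq\sigma(p)$, so each projection is generically bijective. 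Since $X=\C^2$ is normal, a finite birational morphism onto it is an isomorphism, so both projections are isomorphisms and $\sigma=\operatorname{pr}_2\circ(\operatorname{pr}_1|_{Z})^{-1}$ is a regular map; by symmetry $\sigma^{-1}=\sigma$ is regular too, so $\sigma\in\operatorname{Aut}(\C^2)$ is an involution, $\sigma\neq\operatorname{id}$, with $f\circ\sigma=f$.

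Finally I would linearise $\sigma$ and read off the normal form. By the classical linearisability of finite subgroups of $\operatorname{Aut}(\C^2)$ (which follows from the Jung--van der Kulk description of $\operatorname{Aut}(\C^2)$ as an amalgamated free product together with the fact that a finite group acting on a tree has a fixed point), after replacing $\sigma$ by $\Phi_1\sigma\Phi_1^{-1}$ for a suitable $\Phi_1\in\operatorname{Aut}(\C^2)$ we may assume $\sigma=A\in\mathrm{GL}_2(\C)$ with $A^{2}=I$ and $A\neq I$; then $A$ is diagonalisable with eigenvalues in $\{1,-1\}$, so after a further linear conjugation (absorbed into $\Phi_1$) we have $A=\operatorname{diag}(1,-1)$ or $A=-I$. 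In either case $h:=f\circ\Phi_1^{-1}$ satisfies $h\circ A=h$, hence factors as $h=\bar h\circ q$ through the quotient morphism $q\colon\C^2\to\C^2/\langle A\rangle$, and $\bar h$ is finite (since $f$ and $q$ are) and birational (both $h$ and $q$ have degree $2$) onto the normal variety $\C^2$, hence an isomorphism; in particular $\C^2/\langle A\rangle\cong\C^2$. This excludes $A=-I$, whose quotient is the singular quadric cone. Therefore $A=\operatorname{diag}(1,-1)$, the invariant ring is $\C[x,y^{2}]$, $q(x,y)=(x,y^{2})=g$, and setting $\Phi_2:=\bar h\in\operatorname{Aut}(\C^2)$ gives $f\circ\Phi_1^{-1}=\Phi_2\circ g$, that is $f=\Phi_2\circ g\circ\Phi_1$.

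I expect the linearisation in the last step to be the main obstacle: it is the only ingredient that goes beyond formal algebraic geometry, and without it one would only conclude that $f$ is the quotient of $\C^2$ by \emph{some} regular involution, which is strictly weaker than the asserted normal form. The regularity of $\sigma$ in the middle step also requires a little care (irreducibility of $Z$, and applying the normality argument correctly), but it is routine.
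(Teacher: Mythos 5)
Your proof is correct, and its skeleton --- produce a regular involution $\sigma$ with $f\circ\sigma=f$, linearise it, and identify $f$ with the quotient map --- is the same as the paper's, which derives Theorem \ref{lamy} from Corollary \ref{wn2} together with Theorem \ref{lam} (the Linearization Conjecture being a theorem of Kambayashi for $n=2$). The differences lie in two sub-steps, and both of your variants work. First, the paper obtains $\sigma$ topologically: over $Y\setminus D(f)$ the map $f$ is a degree-two covering, every index-two subgroup of $\pi_1$ is normal, so the covering is Galois and carries a nontrivial deck transformation, which is then extended across $f^{-1}(D(f))$ by the Riemann extension theorem and shown to be regular via Serre's theorem on algebraic graphs. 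You instead produce $\sigma$ from Galois theory of the degree-two function field extension and prove regularity by noting that the closure of its graph in $X\times_Y X$ is finite and birational over the normal variety $\C^2$, hence isomorphic to it; this is purely algebraic and arguably cleaner for this special case, though unlike the paper's argument it does not transfer to the holomorphic setting treated elsewhere in the paper. Second, after linearisation (where you both rely on the same essential ingredient --- linearisability of finite-order automorphisms of $\C^2$, which you re-derive from Jung--van der Kulk and Bass--Serre theory), the paper excludes $\sigma\sim -I$ via Lemma \ref{lemat}, using that $\mathrm{Fix}(\sigma)$ equals the critical locus of $f$ and is therefore a curve; you exclude it by observing that $f$ factors as an isomorphism after the quotient map, so $\C^2/\langle\sigma\rangle$ must be smooth, which fails for $-I$. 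Your variant has the minor advantage of not needing to identify $\mathrm{Fix}(\sigma)$ with the critical locus. The concluding quotient computation ($\C[x,y]^{\langle A\rangle}=\C[x,y^2]$, and the induced map of degree one being an automorphism) is identical in both arguments.
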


This theorem, although  interesting, is very special. We explain
why it is so special and we generalize it in a few directions (see
Theorem \ref{lamhol}, Theorem \ref{lam} and Corollary \ref{wn2}).
In fact to describe a finite regular mapping it is not enough to
use only the geometric degree.  We show that in principle a finite
regular mapping is determined by its discriminant and the
geometric degree. The same  is true in a local (and sometimes in a
global) holomorphic case.

\begin{defi}
Let $X,Y$ be smooth algebraic (resp. holomorphic) manifolds of the
same dimension. Let $f, g: X\to Y$ be finite regular (resp.
holomorphic) mappings. We say that $f,g$ are equivalent if there
exists a regular automorphism (resp. a biholomorphism) $\Phi: X\to
X$ such that $f=g\circ \Phi.$ Moreover, we say that $f$ is weakly
equivalent to $g$ if there exist regular automorphisms (resp.
biholomorphisms) $\Phi\in Aut(X), \Psi\in Aut(Y)$ such that
$g=\Psi\circ f\circ \Phi.$
\end{defi}

Of course if $f,g$ are equivalent, then they have the same
discriminant and the same geometric degree. Our first main result
is the following:

\begin{theo}\label{gl}
Let $X,Y$ be smooth algebraic  varieties of the same dimension.
Let $V\subset Y$ be a hypersurface and $k$ be a natural number.
There are only a finite number of non-equivalent finite regular
mappings $f: X\to Y$ such that $D(f)=V$ and $\mu(f)=k.$
\end{theo}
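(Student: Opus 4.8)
The plan is to reduce the statement to the classification of finite étale coverings of $U := Y\setminus V$ and then invoke finite generation of the fundamental group. We may assume $X$, and hence $Y$, irreducible (the general case follows by working componentwise). If $f : X\to Y$ is finite with $D(f)=V$ and $\mu(f)=k$, then $W_f:=f^{-1}(U)$ is a dense Zariski-open subset of $X$, namely the complement of the hypersurface $f^{-1}(V)$, and the restriction $f_U:=f|_{W_f}:W_f\to U$ is a connected finite étale covering of degree $k$ (connected because $X$ is irreducible, étale because all critical points of $f$ lie over $V$, finite because $f$ is finite and flat over the smooth locus by miracle flatness).

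The first ingredient is classical: for a smooth complex quasi-projective variety $U$ the topological fundamental group $\pi_1(U(\C))$ is finitely generated, and by the Riemann existence theorem the connected degree-$k$ étale coverings of $U$, up to isomorphism over $U$, correspond to conjugacy classes of index-$k$ subgroups of $\pi_1(U(\C))$. Since a finitely generated group has only finitely many subgroups of a given finite index, there are, up to isomorphism over $U$, only finitely many connected degree-$k$ étale coverings $W\to U$; call this finite set $\mathcal C$.

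The second, and main, step is a rigidity statement: the pair $(X,f)$ is determined up to equivalence by the class of $f_U:W_f\to U$ in $\mathcal C$. Here one uses that $X$, being smooth hence normal, together with the finite dominant morphism $f$, realizes $X$ as the normalization of $Y$ in the function field $\C(X)=\C(W_f)$: writing $\nu:\widetilde Y\to Y$ for that normalization, $f$ factors as $f=\nu\circ h$ with $h:X\to\widetilde Y$ quasi-finite and proper, hence finite, and birational, so by Zariski's Main Theorem $h$ is an isomorphism. Consequently, if $f,g:X\to Y$ are finite with $\mu=k$ and $W_f\to U$, $W_g\to U$ are isomorphic as coverings of $U$, then the induced $\C(U)$-isomorphism of function fields, by functoriality of normalization, produces an isomorphism $\widetilde\theta$ between the normalization of $Y$ in $\C(W_f)$ and the normalization of $Y$ in $\C(W_g)$ over $Y$; but both of these normalizations are the variety $X$ itself, so $\widetilde\theta\in\operatorname{Aut}(X)$ and $g\circ\widetilde\theta=f$, i.e.\ $f$ and $g$ are equivalent. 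Thus $f\mapsto[f_U]$ is a well-defined injection from the set of equivalence classes of finite regular maps $f:X\to Y$ with $D(f)\subseteq V$ and $\mu(f)=k$ into the finite set $\mathcal C$; restricting to the subset with $D(f)=V$ yields the theorem.

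The step I expect to be the real obstacle is this last rigidity argument: one must be careful that an a priori merely birational isomorphism — an isomorphism of the open subsets $W_f$ and $W_g$ over $U$ — genuinely extends to a regular automorphism of all of $X$ compatible with the maps to $Y$, and that the normalization of $Y$ in $\C(X)$ is truly $X$ and not just a variety birational to it; this is exactly where normality of $X$ and Zariski's Main Theorem are essential. The finite-generation input is standard, and passing from "$D(f)=V$" to "$D(f)\subseteq V$" costs nothing.
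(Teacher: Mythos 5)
Your proof is correct, and its skeleton coincides with the paper's: both reduce the problem to the finiteness of the set of index-$k$ subgroups of the finitely generated group $\pi_1(Y\setminus V)$ (M.~Hall's theorem), via the correspondence between connected degree-$k$ coverings of $U=Y\setminus V$ and such subgroups. Where you genuinely diverge is in the rigidity step, i.e., in promoting an isomorphism of the coverings $W_f\to U$ and $W_g\to U$ to a regular automorphism of $X$ intertwining $f$ and $g$. The paper works analytically: the lift $\phi$ is a single-valued branch of the multivalued map $f_i^{-1}\circ f$, hence holomorphic; properness makes it locally bounded, so it extends across $f^{-1}(V)$ by the Riemann extension theorem; and its graph, being an irreducible component of the algebraic fiber product $X_1\times_Y X_2$, is algebraic, so Serre's theorem on algebraic graphs yields regularity. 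You instead invoke the Riemann existence theorem to make the covering isomorphism algebraic from the start, and then identify $X$ (normal and finite over $Y$) with the normalization of $Y$ in $\C(X)$, so that functoriality of integral closure together with Zariski's Main Theorem produces the desired automorphism; this is a valid and purely algebraic replacement for the extension argument, and it is arguably cleaner in the regular category since no boundedness or graph arguments are needed. The trade-off is that your route leans on the Riemann existence theorem as a black box and does not transfer to the holomorphic and local holomorphic settings (Remark \ref{uwaga} and Theorem \ref{glhol}), where normalization of $Y$ in a function field is unavailable, whereas the paper's analytic extension argument carries over there almost verbatim.
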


\begin{re}\label{uwaga}
{\rm The same result is true in the holomorphic category, if we
additionally assume that the fundamental group of the space
$Y\setminus V$ is finitely generated.}
\end{re}

Note that in general there are two or more non-equivalent proper
mappings with the same discriminant and with the same geometric
degree. For example for $X=Y=\C^2$ there are exactly three
non-equivalent proper mappings  $f_1(x,y)=(x^4,y^4)$,
$f_2(x,y)=(x^8,y^2)$ and $f_3(x,y)=(x^2,y^8)$ with  $D(f)=\{
(x,y)\in \C^2 : xy=0\}$ and $\mu(f)=16$ (see Corollary \ref{wn2}).
However sometimes there is only one such a map:

\begin{co}\label{wn}
Let $V=\{ x\in \C^n : x_1=0\}.$ Every proper polynomial (resp.
holomorphic) mapping $f:\C^n \to\C^n$ with $D(f)=V$ and $\mu(f)=k$
is equivalent to the mapping $$g(x_1,x_2,\ldots, x_n)=(x_1^k,
x_2,\ldots, x_n).$$
\end{co}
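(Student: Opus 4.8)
The plan is to reduce the statement to the uniqueness of an unramified cover and then to recover $f$ from that cover by normalisation. Write $Y=\C^n$ with coordinates $x_1,\dots,x_n$, so $V=\{x_1=0\}$ and $U:=Y\setminus V\cong\C^*\times\C^{n-1}$, whence $\pi_1(U)\cong\Z$. First I would observe that, since $f$ is finite (every proper self-map of $\C^n$ is) of topological degree $k$ with $D(f)=V$, the restriction $f_0\colon X\setminus f^{-1}(V)\to U$ is a degree-$k$ covering (regular, resp. holomorphic). Because $X=\C^n$, the set $f^{-1}(V)=\{x_1\circ f=0\}$ is a nonempty hypersurface — it is not empty since $f$ is surjective, and not all of $X$ since $f$ is dominant — so $X\setminus f^{-1}(V)$ is connected. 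A connected degree-$k$ covering of a space with fundamental group $\Z$ is unique up to isomorphism over the base (it corresponds to the only index-$k$ subgroup $k\Z$), and it is realised by $p_k\colon\C^*\times\C^{n-1}\to\C^*\times\C^{n-1}$, $(t,x_2,\dots,x_n)\mapsto(t^k,x_2,\dots,x_n)$, which is exactly the restriction of $g$ to $g^{-1}(U)$. Hence there is an isomorphism (resp. biholomorphism) $\phi\colon X\setminus f^{-1}(V)\to g^{-1}(U)$ with $g\circ\phi=f$ there. (One may also quote Theorem \ref{gl}, together with Remark \ref{uwaga} since $\pi_1(U)$ is finitely generated, to know that the list of non-equivalent such $f$ is finite; the $\pi_1$-computation already makes it a single entry.)

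The second step is to extend $\phi$ across the branch hypersurface. From $g\circ\phi=f$ one reads off $\phi_i=x_i\circ f$ for $i\ge2$ — these are global functions on $\C^n$ — and $\phi_1^{\,k}=x_1\circ f$. So $\phi_1$ is integral over the ring of global functions: in the algebraic case $\phi_1\in\C(x_1,\dots,x_n)$ is integral over the integrally closed ring $\C[x_1,\dots,x_n]$, hence a polynomial; in the holomorphic case $\phi_1$ is holomorphic off $f^{-1}(V)$ and locally bounded near it (as $|\phi_1|^k=|x_1\circ f|\to0$ there), hence extends by Riemann's extension theorem. Setting $\Phi:=(\phi_1,x_2\circ f,\dots,x_n\circ f)\colon\C^n\to\C^n$, continuity gives $\phi_1^{\,k}=x_1\circ f$ everywhere, so $g\circ\Phi=f$.

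Finally I would check that $\Phi\in\operatorname{Aut}(\C^n)$. It is finite because $g\circ\Phi=f$ is finite and $g$ is separated, and it has degree one because it maps the $k$-point fibre $f^{-1}(y)$ into the $k$-point fibre $g^{-1}(y)$ and is injective on the generic one (being an isomorphism over $U$). A finite morphism (resp. finite holomorphic map) of degree one onto the normal space $\C^n$ is an isomorphism — $\Phi_*\mathcal O_X$ is a sheaf of functions integral over $\mathcal O_{\C^n}$ and contained in the function field, hence equals $\mathcal O_{\C^n}$ — so $f=g\circ\Phi$ with $\Phi$ an automorphism, as required.

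I expect the extension step to be the main obstacle: producing $\phi$ on the complement of the branch locus is immediate from the triviality of the covering theory of $\C^*\times\C^{n-1}$, but one must then be sure it prolongs to a global automorphism of $\C^n$, and it is here that normality of the source together with Riemann's extension theorem (resp. integral closedness of the polynomial ring) does the work. A secondary point to handle carefully is the analytic version of Riemann's theorem used on $\C^n$ and the claim that a degree-one finite holomorphic map onto a normal space is a biholomorphism; both are standard, but worth spelling out in the holomorphic category.
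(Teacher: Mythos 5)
Your proposal is correct, and its first half is exactly the paper's argument: the paper's proof of Corollary \ref{wn} simply notes that $G=\pi_1(\C^n\setminus V)\cong\Z$ has a unique subgroup of index $k$, and then invokes the machinery of the proof of Theorem \ref{gl} to turn the resulting isomorphism of coverings over $\C^*\times\C^{n-1}$ into a global automorphism. Where you genuinely diverge is in that second, extension step. The paper extends $\phi$ by the Riemann extension theorem applied over affine charts and then proves regularity of the extension by identifying its graph with a component of the fiber product $X_1\times_Y X_2$ and appealing to Serre's theorem on algebraic graphs; you instead exploit the explicit shape of $g$ to read off $\Phi=(\phi_1,x_2\circ f,\dots,x_n\circ f)$ with $\phi_1^{\,k}=x_1\circ f$, so that $\phi_1$ is integral over the integrally closed ring $\C[x_1,\dots,x_n]$ (resp.\ locally bounded, hence extendable by Riemann) and everything is written down concretely. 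This buys you a more elementary and self-contained proof in this special case -- no fiber products, no algebraic-graph theorem -- at the cost of not generalizing to an arbitrary target $g$; it also quietly resolves the one point you should make explicit, namely that $\phi$ is a priori only a homeomorphism from covering theory and one must first observe (as the paper does) that it is a single-valued branch of $g^{-1}\circ f$, hence holomorphic, before $\phi_1$ can be fed into the integrality or Riemann-extension argument. Your closing step (a finite degree-one map onto the normal space $\C^n$ is an isomorphism) is a legitimate alternative to the paper's construction of the inverse by running the same extension procedure on $\phi^{-1}$.
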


This result has the following more general counterpart:

\begin{co}\label{wn1}
Let $V=\{ x\in \C^n : x_1=0\}$ and let $X$ be an $n$-dimensional
smooth affine (resp. Stein) variety. Then any two proper regular
(resp. holomorphic) mappings $f,g: X \to\C^n$ with $D(f)=V$ and
$\mu(f)=\mu(g)=k$ are  equivalent.  Moreover, there is then a
regular automorphism (resp. a biholomorphism) $\sigma: X\to X$ of
order $k$ such that $f\circ \sigma=f$ (i.e., $\sigma$ acts
transitively on the fibers of $f$).
\end{co}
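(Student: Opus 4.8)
The plan is to recognize $f$ and $g$ over the complement of the discriminant as finite \'etale coverings of a space with cyclic (resp.\ procyclic) fundamental group, use the uniqueness of such a covering to identify them there, and then extend the identification across $f^{-1}(V)$ using the normality of $X$. Assume first that $X$ is connected; being smooth, it is then irreducible. Write $U=\C^n\setminus V$; since $V=\{x_1=0\}$ we have $U\cong\C^*\times\C^{n-1}$, so $\pi_1(U)\cong\Z$, and its profinite completion (the \'etale fundamental group, in the algebraic case) is $\widehat{\Z}$. Because $D(f)=V$, the restriction $f:f^{-1}(U)\to U$ is an unramified finite morphism between smooth varieties, hence a finite \'etale cover, of degree $\mu(f)=k$; and $f^{-1}(U)$ is the complement in the irreducible $X$ of the hypersurface $\{f_1=0\}$ (a proper nonempty hypersurface, since $f$ is surjective), where $f_1$ is the first coordinate of $f$, so it is connected. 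The same holds for $g$. Since $\Z$ (resp.\ $\widehat{\Z}$) has a unique subgroup (resp.\ open subgroup) of index $k$, there is exactly one connected $k$-sheeted cover of $U$ up to isomorphism, realized by $\C^*\times\C^{n-1}\to\C^*\times\C^{n-1}$, $(z,w)\mapsto(z^k,w)$; in the Stein case this uses only that $\pi_1(U)$ is finitely generated, as in Remark~\ref{uwaga}. Hence there is an isomorphism $\varphi:f^{-1}(U)\to g^{-1}(U)$ with $g\circ\varphi=f$, and the cover $f^{-1}(U)\to U$ is Galois with deck group $\Z/k$.

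Next I would extend $\varphi$ to an automorphism of $X$. Let $\bar\Gamma$ be the closure, inside the fiber product $X\times_{\C^n}X$, of the graph of $\varphi$. The projection $\bar\Gamma\to X$ onto the first factor is finite (being a base change of the finite map $g$) and birational (it is an isomorphism over $f^{-1}(U)$), hence an isomorphism, because $X$ is normal (Zariski's main theorem, which is equally valid for normal complex spaces). Thus $\bar\Gamma$ is the graph of a morphism $\Phi:X\to X$ extending $\varphi$, with $f=g\circ\Phi$; applying the same construction to $\varphi^{-1}$ shows that $\Phi$ is a regular automorphism (resp.\ a biholomorphism). This proves that $f$ and $g$ are equivalent.

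For the automorphism $\sigma$, repeat this step with $\varphi$ replaced by a generator $s$ of the deck group $\Z/k$ of the cover $f^{-1}(U)\to U$: this produces $\sigma\in\operatorname{Aut}(X)$ of order $k$ with $f\circ\sigma=f$. It remains to see that $\langle\sigma\rangle$ acts transitively on the fibers of $f$. Over $U$ this is the transitivity of the deck group of a connected cover. In general, $f\circ\sigma=f$ forces $f$ to factor as $\bar f\circ q$, where $q:X\to X/\langle\sigma\rangle$ is the quotient map; comparing topological degrees, $\bar f$ is a finite morphism of degree $1$ onto the normal variety $\C^n$, hence an isomorphism (again by normality). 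So $f$ is, up to this isomorphism, the quotient map $q$, and a finite group always acts transitively on the fibers of its quotient map; in the Stein case one uses that $X/\langle\sigma\rangle$ is a normal Stein space and argues identically.

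I expect the extension of $\varphi$ (and of $s$) across $f^{-1}(V)$ to be the point needing the most care: this locus has codimension one, so one cannot invoke Hartogs-type extension directly, and the argument genuinely rests on $X$ being normal and finite over $\C^n$ (equivalently, on $X$ being the normalization of $\C^n$ in $K(X)$). The remaining, routine checks are that $\mu(f)$ coincides with the number of sheets of $f^{-1}(U)\to U$ and with $[K(X):\C(x_1,\dots,x_n)]$, that $f^{-1}(U)$ is connected, and that the reduction to $X$ connected (and, if the word ``variety'' is allowed to mean reducible, to $X$ irreducible on each relevant component) is harmless.
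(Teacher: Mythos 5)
Your proposal is correct, and its first half is the same as the paper's: since $\pi_1(\C^n\setminus V)\cong\Z$ has a unique subgroup of index $k$, the restrictions of $f$ and $g$ over $U=\C^n\setminus V$ are isomorphic connected Galois covers with cyclic deck group, which yields both the identification $\varphi$ and the generator $s$. Where you genuinely diverge is the extension across $f^{-1}(V)$. The paper first extends $\varphi$ holomorphically using that it is a bounded single-valued branch of $g^{-1}\circ f$ together with the Riemann extension theorem, and only then identifies the graph with a component of the fiber product and invokes Serre's theorem on algebraic graphs to get regularity; you instead take the closure of the graph inside $X\times_{\C^n}X$ directly and conclude from finiteness plus birationality onto the normal $X$ (Zariski's Main Theorem, resp.\ its analytic analogue). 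Your route is cleaner in the algebraic category and avoids Serre's theorem, at the price of needing the closure of the analytic graph to be analytic in the Stein case (Remmert--Stein, or the fact that it is a union of components of the $n$-dimensional fiber product); the paper's route is more uniform between the two categories. You also supply something the paper leaves implicit: transitivity of $\langle\sigma\rangle$ on the fibers \emph{over points of $V$}, via the factorization $f=\bar f\circ q$ through $X/\langle\sigma\rangle$ and degree count --- the paper only gets transitivity over $U$ from deck-transformation theory, so this is a worthwhile addition rather than a redundancy. Two small points to make explicit: in the topological/Stein formulation the lifted homeomorphism must be checked to be holomorphic before its graph can be closed up analytically (this is the paper's ``branch of $g^{-1}\circ f$'' remark), and the Stein case of the covering-space step should cite the finite generation of $\pi_1(U)$ as in Remark~\ref{uwaga} together with the fact that the relevant covers are again Stein, which is how the paper phrases it.
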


 We have here one of possible  generalizations of Theorem \ref{lamy}
 (for other  generalizations see Theorems \ref{lamhol} and  \ref{lam}):

\begin{co}\label{wn2} Let $X,Y$ be smooth algebraic (resp. holomorphic)
manifolds of dimension $n.$ Let $f: X\to Y$ be a finite regular
(resp. holomorphic) mapping with $\mu(f)=2.$ Then there is an
automorphism (resp. a biholomorphism) $\sigma: X\to X$ of order
two such that $f\circ \sigma= f$ (i.e., $\sigma$ acts transitively
on fibers of $f$). The set of fixed points of $\sigma$ coincides
with the set of critical points of $f.$
\end{co}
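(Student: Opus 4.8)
The plan is to reduce the statement to Corollary \ref{wn1} by a local-to-global argument, exploiting the fact that a degree-$2$ covering is automatically regular (Galois). First I would consider the restriction $f' : X' \to Y'$ of $f$ over the complement of the discriminant, where $Y' = Y \setminus D(f)$ and $X' = f^{-1}(Y')$. Since $\mu(f)=2$, the map $f'$ is an unramified double cover, hence a principal $\Z/2$-bundle: the nontrivial deck transformation $\sigma'$ swaps the two points of each fibre, is regular (resp.\ holomorphic), has order two, and satisfies $f' \circ \sigma' = f'$. The real content is to show that $\sigma'$ extends to a regular (resp.\ holomorphic) automorphism $\sigma$ of all of $X$, including over the ramification locus.

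For the extension I would argue locally on $Y$ near a point $y_0 \in D(f)$, and here I would invoke Corollary \ref{wn1} in its local form, or rather the structural input behind it. By the purity of the branch locus / the local normal form for finite maps of degree $2$ between smooth varieties (which is exactly the degenerate case $k=2$ of the statements proved in the body of the paper), after choosing suitable local coordinates the map $f$ looks like $(x_1,\dots,x_n)\mapsto(x_1^2,x_2,\dots,x_n)$ on each component of $f^{-1}(\text{small ball})$ that meets the critical locus; the deck involution is then $(x_1,x_2,\dots,x_n)\mapsto(-x_1,x_2,\dots,x_n)$, which is manifestly regular and extends $\sigma'$ across $\{x_1=0\}$. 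On components not meeting the critical locus $f$ is a trivial double cover and $\sigma'$ extends as the obvious swap. These local extensions agree on overlaps because they agree on the dense open set $X'$, so they glue to a global automorphism $\sigma : X \to X$ with $\sigma^2 = \mathrm{id}$ and $f \circ \sigma = f$.

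It remains to identify the fixed-point set of $\sigma$ with the set of critical points of $f$. On $X'$ the map $\sigma$ has no fixed points (it acts freely on fibres), so $\mathrm{Fix}(\sigma)$ is contained in the ramification locus $R(f) = X \setminus X'$. Conversely, in the local model $(x_1,\dots,x_n)\mapsto(x_1^2,x_2,\dots,x_n)$ the critical set is $\{x_1=0\}$, which is exactly the fixed-point set of $x_1 \mapsto -x_1$; hence $R(f) \subset \mathrm{Fix}(\sigma)$, giving equality. The main obstacle I anticipate is establishing the local normal form cleanly — i.e.\ that a finite degree-$2$ map of smooth $n$-folds is, locally on the source, analytically (or \'etale-locally) equivalent to $x_1 \mapsto x_1^2$ in one coordinate and the identity in the rest. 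This is where smoothness of $X$ and $Y$ is essential: one uses that the branch divisor in $Y$ is smooth (by purity, since $X$ is smooth and normal and the cover has degree $2$), straightens it to a coordinate hyperplane, and then checks that the double cover branched along a smooth hypersurface is the standard one. Once this normal form is in hand, the gluing and the identification of $\mathrm{Fix}(\sigma)$ are routine.
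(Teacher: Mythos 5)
Your reduction to the unramified locus and the observation that an index-two subgroup is automatically normal (so the deck involution $\sigma'$ exists on $X'=f^{-1}(Y\setminus D(f))$ and acts transitively on fibres) agree with the paper. The divergence, and the gap, is in the extension step. You extend $\sigma'$ across the ramification locus by invoking a local normal form $(x_1,\dots,x_n)\mapsto(x_1^2,x_2,\dots,x_n)$, justified by the claim that the branch divisor in $Y$ is smooth ``by purity''. Purity of the branch locus only gives that the branch locus has pure codimension one; it says nothing about smoothness. Smoothness of the discriminant of a degree-two cover with smooth total space is true, but it is essentially the content of Theorem \ref{lamhol}, which the paper proves \emph{later} (via Cartan's linearization theorem and Lemma \ref{lemat}); to use it here you would have to supply an independent proof (e.g.\ the rank-two structure theorem for $f_*\mathcal{O}_X$: complete the square to write the cover locally as $z^2=h$, and deduce $dh\neq 0$ on $\{h=0\}$ from smoothness of $X$), which you do not do --- you yourself flag the normal form as ``the main obstacle'', and the one justification you attach to it is incorrect.

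The paper's own extension argument is much lighter and needs no normal form: $\sigma'$ is a single-valued branch of the multivalued map $f^{-1}\circ f$, hence holomorphic on $X'$; near a point $x$ with $f(x)\in D(f)$ one has $f^{-1}(f(x))=\{x\}$, so by finiteness of $f$ the map $\sigma'$ sends a small punctured neighbourhood of $x$ into a small coordinate ball around $x$, is therefore bounded, and the Riemann extension theorem extends it holomorphically with $\sigma(x)=x$; this at once gives $\mathrm{Fix}(\sigma)=C$ without any local model. In the algebraic category the paper then identifies the graph of $\sigma$ with a component of the fibre product $X\times_Y X$ and applies Serre's theorem on algebraic graphs to conclude that $\sigma$ is regular --- a step your proposal omits entirely: your local analytic coordinates make the extension ``manifestly'' holomorphic, not regular, so even granting the normal form your argument would only prove the holomorphic half of the statement.
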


As a  nice  application of Corollary \ref{wn2} we have:

\begin{co}\label{wniloraz}
If $f:X\to Z$ (where $X$ is smooth and $Z$ is normal) is a finite
surjective morphism (resp. holomorphic mapping) of topological
degree two, then there is a  subgroup $G\subset Aut(X)$ of order
two, such that

1) $Z\cong X/G,$

2) $f$ is equivalent to the projection $\pi: X\to X/G.$
\end{co}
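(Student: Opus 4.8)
The plan is to deduce the corollary from Corollary~\ref{wn2}; the one point that needs care is that $Z$ is only assumed normal, not smooth. Since $Z$ is normal, its singular locus $Z_{\mathrm{sing}}$ has codimension $\ge 2$, and because $f$ is finite the set $W:=f^{-1}(Z_{\mathrm{sing}})$ has codimension $\ge 2$ in $X$ as well. Set $X':=X\setminus W$ and $Z':=Z\setminus Z_{\mathrm{sing}}$. Then $f':=f|_{X'}\colon X'\to Z'$ is a finite surjective morphism (resp.\ holomorphic mapping) between smooth varieties (resp.\ manifolds) of the same dimension, of topological degree two, so Corollary~\ref{wn2} provides an automorphism (resp.\ biholomorphism) $\sigma'\colon X'\to X'$ of order two with $f'\circ\sigma'=f'$; moreover its fixed-point set is the critical set of $f'$, so over a generic point of $Z'$ the map $\sigma'$ interchanges the two points of the fibre, whence $\sigma'\ne\mathrm{id}$.

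Next I would extend $\sigma'$ to an automorphism of $X$. Let $\Gamma\subset X\times X$ be the reduced closure of the graph of $\sigma'$. From $f'\circ\sigma'=f'$ the graph of $\sigma'$ is contained in the fibre product $X\times_Z X$, and therefore so is $\Gamma$. The first projection $X\times_Z X\to X$ is finite, being the base change of the finite map $f$; hence the induced morphism $\Gamma\to X$ is finite, it is an isomorphism over the dense open set $X'$, and its image is closed and dense, so it is finite and birational onto $X$. Since $X$ is smooth, hence normal, a finite birational morphism onto $X$ is an isomorphism, so $\Gamma$ is the graph of a morphism $\sigma\colon X\to X$ extending $\sigma'$. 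As $\sigma^{2}$ and $\mathrm{id}_{X}$ agree on the dense set $X'$ they agree on $X$, so $\sigma\in\operatorname{Aut}(X)$, and $f\circ\sigma=f$ holds on $X'$, hence on all of $X$. Together with $\sigma'\ne\mathrm{id}$ this shows that $G:=\{\mathrm{id},\sigma\}$ is a subgroup of $\operatorname{Aut}(X)$ of order two. In the holomorphic situation the same reasoning applies, using the analytic fibre product $X\times_Z X$ and the fact that a finite bimeromorphic morphism onto a normal complex space is a biholomorphism (equivalently, one extends $\sigma'$ across the analytic set $W$ of codimension $\ge 2$ by Riemann's extension theorem, the required local boundedness of $\sigma'$ being a consequence of the properness of $f$).

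It remains to identify $Z$ with $X/G$. The quotient $\pi\colon X\to X/G$ exists: cover $Z$ by affine (resp.\ Stein) open sets $U$; each $f^{-1}(U)$ is affine (resp.\ Stein, being finite over $U$) and $G$-invariant, so one may form $f^{-1}(U)/G$ and glue. Because $f\circ\sigma=f$, the morphism $f$ is $G$-invariant and factors as $f=\bar f\circ\pi$ with $\bar f\colon X/G\to Z$ finite. Comparing topological degrees, $2=\mu(f)=\mu(\bar f)\cdot\mu(\pi)=2\,\mu(\bar f)$, so $\mu(\bar f)=1$ and $\bar f$ is birational (resp.\ bimeromorphic); since $X/G$ is normal (the quotient of a normal space by a finite group) and $Z$ is normal, $\bar f$ is an isomorphism (resp.\ biholomorphism). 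This proves (1), and the factorization $f=\bar f\circ\pi$ exhibits $f$ as equivalent to the projection $\pi\colon X\to X/G$, proving (2).

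I expect the only genuinely non-formal step to be the passage from a smooth target to a merely normal one --- that is, the extension of $\sigma'$ across $W$ and the verification that the resulting $\sigma$ is still an order-two automorphism of $X$; everything after that is a formal consequence of normality and the multiplicativity of the degree.
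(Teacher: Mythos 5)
Your proof is correct, and it reaches the same two structural milestones as the paper's: produce an order-two automorphism $\sigma$ with $f\circ\sigma=f$, then show the induced finite map $X/G\to Z$ is an isomorphism because $Z$ is normal. The difference lies in how the normality of $Z$ (as opposed to smoothness) is handled. The paper simply invokes Remark~\ref{normal}: it enlarges the discriminant to $D(f)=\{y: y\in{\rm Sing}(Z)\ \text{or}\ \#f^{-1}(y)<\mu(f)\}$ so that the covering-space argument of Corollary~\ref{wn2} runs unchanged over $Z\setminus D(f)$ and the local Riemann-extension step already yields the involution on all of $X$; it then observes that $X/G\to Z$ is a finite bijection and applies the Zariski Main Theorem. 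You instead apply Corollary~\ref{wn2} verbatim over the smooth locus $Z'=Z\setminus Z_{\rm sing}$ and then extend $\sigma'$ across the codimension-$\ge 2$ set $f^{-1}(Z_{\rm sing})$ by taking the closure of its graph inside the fibre product $X\times_Z X$ and using that a finite birational morphism onto the normal variety $X$ is an isomorphism; your identification of $Z$ with $X/G$ goes through degree multiplicativity rather than bijectivity, but both reduce to ``finite $+$ birational $+$ normal target $\Rightarrow$ isomorphism.'' Your route has the merit of being self-contained where the paper leans on an unproved remark: your graph-closure extension is essentially the justification that Remark~\ref{normal} leaves implicit. Two minor points worth making explicit if you write this up: $\Gamma$ is irreducible because $X$ (hence $X'$) is, which is what makes ``finite birational'' meaningful; and the existence of the quotient $X/G$ uses exactly your observation that $G$-orbits lie in the affine (resp.\ Stein) opens $f^{-1}(U)$.
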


We  also have:

\begin{co}\label{wn3}
Let $K_n=\{ x\in \C^n : \prod^n_{i=1} x_i=0\}.$ Every proper
polynomial (resp. holomorphic) mapping $f:\C^n \to\C^n$ with
$D(f)=V$ and $\mu(f)=k$ is equivalent to one of  the mappings
$$f_{d_1,\ldots,d_n}:\C^n\ni (x_1,\ldots, x_n)\mapsto (x_1^{d_1}, \ldots,
x_n^{d_n})\in \C^n,$$ where $\prod^n_{i=1} d_i=k$ and all $d_i>1.$
In particular if $k$ is a prime number and $n>1$, then there is no
proper polynomial (resp. holomorphic)  mappings with $D(f)=K_n$
and $\mu(f)=k.$
\end{co}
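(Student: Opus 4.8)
The plan is to study $f$ over the big torus $U:=\C^n\setminus K_n=(\C^*)^n$ and to recover $f$ from the covering it induces there, via a normalization argument. First I would note that, $f$ being finite, $f^{-1}(K_n)$ is a hypersurface, so $\widetilde U:=f^{-1}(U)$ is a dense open — hence connected — subset of $\C^n$, and $f|_{\widetilde U}\colon\widetilde U\to U$ is an unramified covering of degree $k=\mu(f)$. It therefore corresponds to a subgroup $H\le\pi_1(U)=\Z^n$ of index $k$; since $\Z^n$ is abelian the covering is Galois with abelian group $A:=\Z^n/H$ of order $k$. The key structural remark is that, because $f$ is finite and the source $\C^n$ is normal with the same function field (resp. field of meromorphic functions) as $\widetilde U$, the pair (source $\C^n$, $f$) is exactly the normalization of the target $\C^n$ in the covering $\widetilde U\to U$, hence is determined, up to isomorphism over the target, by $H$ alone. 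In particular any two proper maps $\C^n\to\C^n$ with discriminant contained in $K_n$, topological degree $k$, and the same $H$ are equivalent.

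Next I would identify this normalization toric-geometrically. Viewing $\C^n$ as the affine toric variety of the cone $\sigma=\R_{\ge 0}^n$ with respect to the lattice $N=\Z^n$ (torus $U$), the inclusion of the finite-index sublattice $H\hookrightarrow\Z^n$ induces precisely the covering of tori attached to $H$; hence the normalization of $\C^n$ in $\widetilde U\to U$ is the affine toric variety $X_\sigma(H)$ of the same cone $\sigma$ with respect to the lattice $H$. Setting $d_i:=\min\{m>0:\ m e_i\in H\}$ (finite, since $ke_i\in H$), the vectors $d_1e_1,\dots,d_ne_n$ are the primitive generators of the rays of $\sigma$ in $H$, and $X_\sigma(H)$ is smooth if and only if they form a $\Z$-basis of $H$, i.e. if and only if $H=\bigoplus_{i=1}^nd_i\Z e_i$ (equivalently $\prod_i d_i=[\Z^n:H]=k$); in that case $X_\sigma(H)\cong\C^n$ and in suitable coordinates the map becomes $(x_1,\dots,x_n)\mapsto(x_1^{d_1},\dots,x_n^{d_n})=f_{d_1,\dots,d_n}$. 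Since the source of $f$ is $\C^n$, hence smooth, this forces $H=\bigoplus_id_i\Z e_i$ with $\prod_id_i=k$; and because $f_{d_1,\dots,d_n}$ induces over $U$ exactly the covering with this subgroup, the first paragraph yields that $f$ is equivalent to $f_{d_1,\dots,d_n}$.

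To finish I would pin down the $d_i$ and handle the prime case. Here $f_{d_1,\dots,d_n}$ is proper with Jacobian $\prod_i d_i\,x_i^{d_i-1}$, so its discriminant is $\bigcup_{i:\,d_i\ge 2}\{y_i=0\}$; since equivalent maps have the same discriminant, $D(f)=K_n$ forces $d_i\ge 2$ for all $i$, which is the first assertion. For the last assertion, if $k$ is prime and $n>1$ then $k$ admits no factorization $k=\prod_{i=1}^nd_i$ with all $d_i\ge 2$ (a product of $n\ge 2$ integers each $\ge 2$ is composite), so by what precedes no such $f$ can exist. The holomorphic case should go through verbatim, using the Grauert--Remmert normalization of $\C^n$ in the finite covering $\widetilde U\to U$ and the fact that near $f^{-1}(K_n)$ the map has the same toric local model (note $\pi_1(U)=\Z^n$ is finitely generated, cf. Remark \ref{uwaga}).

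I expect the one substantial step to be the middle paragraph: identifying the normalization with the toric variety $X_\sigma(H)$ and recognizing that its smoothness is exactly the condition that $H$ be a ``box'' sublattice $\bigoplus_id_i\Z e_i$. (For a non-box $H$, e.g. $H=\langle(2,0),(1,1)\rangle\subset\Z^2$, the normalization is a cyclic quotient singularity, so no such $f$ out of $\C^n$ exists.) The remaining steps are either formal — the covering/normalization dictionary — or elementary computations.
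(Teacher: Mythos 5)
Your proof is correct, but the decisive step is carried out quite differently from the paper. Both arguments begin the same way: over $U=(\C^*)^n$ the map $f$ restricts to an unramified covering classified by a subgroup $H\le\pi_1(U)=\Z^n$ of index $k$, and two finite maps onto $\C^n$ branched only over $K_n$ with the same $H$ are equivalent --- the paper gets this from the lifting-and-extension argument in the proof of Theorem \ref{gl}, while you re-derive it via uniqueness of the normalization of the target in the covering; these are interchangeable. Where you genuinely diverge is in proving that $H$ must be a ``box'' $\bigoplus_i d_i\Z e_i$ with $\prod_i d_i=k$. The paper does this with algebraic topology: it decomposes $H_1$ of the complement of $f^{-1}(K_n)$ and of $K_n$ by Mayer--Vietoris into direct sums over the coordinate pieces and observes that $H_1(f)$ respects the decomposition. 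You instead identify the source of $f$ with the affine toric variety of the cone $\R_{\ge 0}^n$ taken with respect to the sublattice $H$, and use smoothness of the source ($=\C^n$) to force $H$ to be generated by the primitive ray generators $d_ie_i$. Your route needs the toric dictionary (and Grauert--Remmert in the holomorphic case), but it buys a conceptual explanation of \emph{why} non-box subgroups cannot arise from a map out of $\C^n$ --- they would produce quotient singularities upstream, as your example $H=\langle(2,0),(1,1)\rangle$ shows --- whereas the paper's homological argument is more elementary and self-contained. The concluding steps (excluding $d_i=1$ by comparing discriminants, and the observation for prime $k$) coincide in the two treatments.
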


Our method can  also  sometimes be applied to arbitrary regular
mappings with finite fibers (i.e.,  quasi-finite mappings). Let us
recall that if $f:\C^n\to\C^n$ is a generically-finite regular
mapping, then  $B(f)=\{ x\in \C^n : \# f^{-1}(x)\not=\mu(f)\}$ is
called  the bifurcation set of $f.$ It is always a hypersurface
(see e.g. \cite{j-k}). We have:

\begin{co}\label{wn3'}
Let $K_n=\{ x\in \C^n : \prod^n_{i=1} x_i=0\}.$ Every quasi-finite
regular  mapping $f:\C^n \to\C^n$ with $B(f)\subset K_n$ is
proper.
\end{co}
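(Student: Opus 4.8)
The plan is to reduce the problem to the already-established Corollary \ref{wn3} by compactifying the target and using the description of proper mappings with discriminant contained in a normal-crossing divisor. First I would observe that a quasi-finite regular mapping $f : \C^n \to \C^n$ with $B(f) \subset K_n$ is automatically proper \emph{over} the open torus $T = \C^n \setminus K_n$: indeed, over $T$ the fiber cardinality is constant and equal to $\mu(f)$, so $f^{-1}(T) \to T$ is a finite (hence proper) covering. The issue is only what happens over the coordinate hyperplanes $\{x_i = 0\}$. The key geometric input is that $T \cong (\C^*)^n$ has a very simple algebraic fundamental group, $\pi_1(T) \cong \Z^n$, and finite étale covers of $(\C^*)^n$ are completely classified: each connected one is, up to isomorphism over $T$, of the form $(\C^*)^n \to (\C^*)^n$, $(t_1,\dots,t_n) \mapsto (t_1^{d_1},\dots,t_n^{d_n})$ composed with a translation, for some positive integers $d_i$. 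So the restriction $f|_{f^{-1}(T)}$ is, after an automorphism of the source torus, one of these standard power maps (possibly with several connected components, but the total space must be a \emph{connected} affine variety since the source is $\C^n$, which forces a single component and in fact a connected cover).

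Next I would use this to produce a candidate proper extension. Fix the power map $g_{d_1,\dots,d_n} : \C^n \to \C^n$, $(x_1,\dots,x_n) \mapsto (x_1^{d_1},\dots,x_n^{d_n})$, with $\prod d_i = \mu(f)$, restricting to the same cover of $T$ as $f$. The source of $f$ and the source of $g_{d_1,\dots,d_n}$ both compactify naturally (say inside $\field{P}^n$ or a smooth toric variety), and $g_{d_1,\dots,d_n}$ is visibly proper (indeed finite). The point is to show that $f$ and $g_{d_1,\dots,d_n}$, agreeing over $T$ via an isomorphism $h : f^{-1}(T) \to g_{d_1,\dots,d_n}^{-1}(T) = T$, must in fact agree globally — i.e., the isomorphism $h$ extends to an automorphism of $\C^n$. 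Here I would invoke the usual Riemann-extension / normality argument: $\C^n$ is normal, $K_n$ has codimension one but $h$ is a birational map $\C^n \dashrightarrow \C^n$ which is an isomorphism in codimension one (it is defined and invertible away from $K_n$, and both sides are normal), so by properness of $g_{d_1,\dots,d_n}$ and Zariski's main theorem $h$ extends to a regular isomorphism $\C^n \to \C^n$; composing, $f = g_{d_1,\dots,d_n} \circ h^{-1}$ is proper.

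The step I expect to be the main obstacle is precisely showing that the isomorphism $h$ over the torus extends across $K_n$ — equivalently, ruling out the possibility that $f$ "loses a sheet" or acquires a non-compact fiber over a coordinate hyperplane. A priori a quasi-finite mapping can fail to be proper only by having a fiber component escape to infinity, so one must argue that the power-map model, which \emph{is} proper, is forced: the mechanism is that the source $\C^n$ is smooth and $K_n$ is a simple normal crossing divisor, so that locally near a point of $\{x_i = 0\}$ the map $f$ is (after the torus identification) given by a finite collection of branches of $t_i \mapsto t_i^{d_i}$, each of which extends holomorphically/algebraically; there is simply no room for an unbounded branch without violating the single-valuedness already known over $T$ combined with normality of the source. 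Concretely I would do this by pulling back to a toric compactification where $g_{d_1,\dots,d_n}$ becomes a finite toric morphism, applying the valuative criterion of properness along the toric boundary divisors, and transferring the conclusion back to $f$ via the codimension-one agreement. Once properness is in hand, Corollary \ref{wn3} identifies $f$ up to equivalence with one of the $f_{d_1,\dots,d_n}$, which also re-proves the "in particular" statement there is no genuine extra content to check.
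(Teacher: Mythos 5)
Your overall strategy coincides with the paper's: restrict $f$ to a covering of the torus $T=\C^n\setminus K_n$ (properness over $T$ does follow from $B(f)\subset K_n$), match this covering with a standard power map $f_{d_1,\dots,d_n}$, extend the resulting identification of the two covers to a birational map of $\C^n$, and use the Zariski Main Theorem together with properness of the power map to conclude. The toric-compactification/valuative-criterion discussion in your last paragraph is not needed; the ZMT extension argument you already sketch in your second paragraph is exactly how the paper finishes.

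There is, however, a genuine gap at the pivot of the argument, namely where you classify the covers. It is not true that every connected finite \'etale cover of $(\C^*)^n$ is, up to isomorphism over the base, a power map composed with a translation. Such covers correspond to finite-index subgroups of $\pi_1(T)\cong\Z^n$, and only the \emph{diagonal} subgroups $d_1\Z\oplus\cdots\oplus d_n\Z$ yield power maps. A non-diagonal subgroup such as $\{(a,b)\in\Z^2 : a+b\equiv 0 \pmod 2\}$ gives the cover $(s,w)\mapsto(sw,\,s/w)$ of $(\C^*)^2$, equivalently $\{t^2=xy\}\cap(\C^*)^3\to(\C^*)^2$, which is not a power map in any coordinates preserving the coordinate hyperplanes, and whose natural completion is the singular quadric cone rather than $\C^2$. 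Consequently the existence of a power map $g_{d_1,\dots,d_n}$ ``restricting to the same cover of $T$ as $f$'' is precisely what has to be proved, and it is here that the hypothesis that the \emph{source} of $f$ is $\C^n$ must enter: one needs to show that $f_*\pi_1(f^{-1}(T))\subset\Z^n$ is diagonal. The paper does this by the Mayer--Vietoris computation in the proof of Corollary \ref{wn3} ($H_1$ of $\C^n$ minus a union of hypersurfaces $F_i=\{f_i=0\}$ splits as the direct sum of the $H_1(\C^n\setminus F_i)$, and $f$ respects this splitting), an argument that applies verbatim to a quasi-finite $f$. Once that lemma is inserted, your extension-by-ZMT step goes through and agrees with the paper's proof.
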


If we use the concept of weak equivalence we have:

\begin{co}\label{wn4}
a) Let $f:\C\to \C$ be a  polynomial. If $D(f)$ is a  point, then
$f$ is weakly equivalent to the mapping $g(x)=x^k$, where
$k=\mu(f).$ Also there are  only a finite number of non weakly
equivalent polynomials $f$ of degree $k$ which have exactly two
critical points.

b) Let $f:\C^2\to \C^2$ be a proper polynomial mapping. If $D(f)$
is isomorphic to $\Bbb A^1(\C)$, then $f$ is weakly equivalent to
the mapping $g(x,y)=(x^k, y)$, where $k=\mu(f).$

c) Let $f:\C^n\to \C^n$ be a proper polynomial mapping. If $D(f)$
is isomorphic to $K_n=\{ x\in \C^n : \prod^n_{i=1} x_i=0\}$, then
$f$ is weakly equivalent to one of  the mappings
$$f_{d_1,\ldots,d_n}: \C^n\ni (x_1,\ldots, x_n)\mapsto (x_1^{d_1}, \ldots,
x_n^{d_n})\in\C^n,$$ where $\prod^n_{i=1} d_i=\mu(f)$ and all
$d_i>1.$
\end{co}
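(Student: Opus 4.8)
In all three parts the plan is the same: use an automorphism of the target to move the discriminant into standard position, and then quote the rigidity results already established for standard discriminants. \emph{Part (a).} A nonconstant polynomial $f:\C\to\C$ is automatically finite and $\mu(f)=\deg f=:k$. If $D(f)$ is a single point $\{c\}$, I would compose with the translation $\Psi(z)=z-c\in\operatorname{Aut}(\C)$; then $\Psi\circ f$ is finite, $\mu(\Psi\circ f)=k$, and $D(\Psi\circ f)=\Psi(D(f))=\{0\}$, which is exactly the hypersurface $V=\{x_1=0\}$ of Corollary \ref{wn} in dimension $n=1$. That corollary then says $\Psi\circ f$ is equivalent to $g(x)=x^k$, so $f$ is weakly equivalent to $g$. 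For the last assertion, if $f$ has degree $k$ and exactly two critical points then $D(f)$ has at most two points, so after a suitable affine automorphism of $\C$ its image is one of the two fixed configurations $\{0\}$ or $\{0,1\}$; applying Theorem \ref{gl} to each of these two hypersurfaces $V$ (with the fixed number $k$) gives finitely many equivalence classes, hence a fortiori finitely many weak-equivalence classes, of such $f$.

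\emph{Part (b).} Since $f$ is proper with affine source and target it is finite; put $k=\mu(f)$. Taken with its reduced structure, $D(f)$ is a closed curve in $\C^2$ isomorphic to $\Bbb A^1$, so the inclusion $\Bbb A^1\cong D(f)\hookrightarrow\C^2$ is a closed embedding of the affine line; by the Abhyankar--Moh--Suzuki theorem there is a polynomial automorphism $\Psi\in\operatorname{Aut}(\C^2)$ with $\Psi(D(f))=\{x_1=0\}$. Then $\Psi\circ f$ is finite with $D(\Psi\circ f)=\{x_1=0\}$ and $\mu(\Psi\circ f)=k$, so Corollary \ref{wn} (with $n=2$) gives that $\Psi\circ f$ is equivalent to $g(x,y)=(x^k,y)$, and therefore $f$ is weakly equivalent to $g$.

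\emph{Part (c).} Again $f$ is finite; set $k=\mu(f)$. The goal is to produce $\Psi\in\operatorname{Aut}(\C^n)$ with $\Psi(D(f))=K_n$: once this is done, $\Psi\circ f$ has discriminant $K_n$ and $\mu=k$, so Corollary \ref{wn3} says it is equivalent to some $f_{d_1,\dots,d_n}$ with $\prod d_i=k$ and all $d_i>1$, whence $f$ is weakly equivalent to that $f_{d_1,\dots,d_n}$. To build $\Psi$: the isomorphism $D(f)\cong K_n$ identifies the $n$ irreducible components $C_1,\dots,C_n$ of $D(f)$ with the coordinate hyperplanes, so each $C_i$ is a smooth hypersurface isomorphic to $\C^{n-1}$ and every multiple intersection $\bigcap_{i\in S}C_i$ is a $\C^{n-|S|}$; using in addition that $D(f)$ is a discriminant (so the local normal forms of $f$ along the strata of $D(f)$ constrain how the $C_i$ meet) one would first check that $D(f)$ is embedded as a normal-crossings union of such hypersurfaces with the combinatorics of the coordinate cross, and then rectify it to $K_n$ — for instance by induction on $n$: straighten $C_1$, reduce the straightening of the traces $C_1\cap C_j$ ($j\ge2$) to dimension $n-1$, and iterate; an alternative route is to exhibit $f$ as a composition of finite maps each branched along a single component of $D(f)$ and to straighten these one at a time via Corollaries \ref{wn} and \ref{wn2}.

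The routine parts are (a) and (b): in (a) nothing is needed beyond Corollary \ref{wn}, Theorem \ref{gl}, and the trivial classification of one- and two-point subsets of $\C$ up to $\operatorname{Aut}(\C)$, while in (b) the only substantial input is the Abhyankar--Moh--Suzuki theorem (together with the observation that an abstract isomorphism $D(f)\cong\Bbb A^1$ already makes $D(f)$ a closed embedding of the line). The real difficulty is the rectification in (c). Straightening even a single smooth hypersurface isomorphic to $\C^{n-1}$ inside $\C^n$ is the higher-dimensional Abhyankar--Moh problem, which is open for $n\ge3$; hence the argument cannot be purely about the abstract variety $D(f)$, and must exploit that $D(f)$ is the discriminant of a \emph{finite} map — its normal-crossings structure and the explicit local models of $f$ near each stratum — in order to promote the isomorphism $D(f)\cong K_n$ to an honest rectification $\Psi(D(f))=K_n$.
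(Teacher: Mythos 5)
Parts (a) and (b) of your proposal coincide with the paper's argument: reduce the discriminant to a standard position by an automorphism of the target (trivially for one or two points in $\C$, via Abhyankar--Moh--Suzuki for the line in $\C^2$) and then invoke Corollary \ref{wn} resp.\ Theorem \ref{gl}. These parts are fine.

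Part (c) has a genuine gap, which you yourself flag but do not close. The step you are missing is not something to be extracted from the normal-crossings structure of the discriminant or from local models of $f$: the paper closes it by citing the author's earlier theorem \cite{jel} (``A hypersurface which has the Abhyankar--Moh Property''), which states that the $n$-cross $K_n$ has only one closed embedding into $\C^n$ up to polynomial automorphism. In other words, the rectification $\Psi(D(f))=K_n$ follows from the abstract isomorphism $D(f)\cong K_n$ alone, with no use of the fact that $D(f)$ is a discriminant. Your meta-remark that ``the argument cannot be purely about the abstract variety $D(f)$'' is therefore pointing in the wrong direction: the resolution is precisely an abstract embedding-rigidity theorem for $K_n$. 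The subtlety is that although rectifiability of a single hypersurface isomorphic to $\C^{n-1}$ in $\C^n$ is open for $n\ge 3$ (as you correctly note), the union of $n$ such hypersurfaces meeting with the combinatorics of the coordinate cross is rigid --- that is the content of \cite{jel}, and it is a substantive theorem, not something your proposed component-by-component induction would produce (indeed, straightening the first component $C_1$ already runs into the open problem you mention). Once $\Psi$ is obtained from \cite{jel}, the conclusion follows from Corollary \ref{wn3} exactly as you say.
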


\subsection{Local case}
It is very interesting that our method works in the local
holomorphic setting as well. In particular we will prove:

\begin{theo}\label{glhol}
Let $(V,0)\subset (\C^n,0)$ be a germ of  analytic hypersurface
and let $k$ be a natural number. There are only a finite number of
non-equivalent proper holomorphic mappings $f: (\C^n,0)\to (\C^n,
0)$ such that $D(f)=V$ and $\mu(f)=k.$
\end{theo}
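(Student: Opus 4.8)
The plan is to follow the strategy behind Theorem~\ref{gl} and Remark~\ref{uwaga}, the essential point being that in the local setting finite generation of the fundamental group comes for free. First I would fix a sufficiently small ball $U$ around $0\in\C^n$ so that, by Milnor's local conical structure theorem, $U$ is homeomorphic to the open cone over $(\partial U,\partial U\cap V)$; in particular $U^{*}:=U\setminus V$ is connected and has the homotopy type of a finite CW complex, so $\pi_{1}(U^{*})$ is finitely presented, and in particular finitely generated. Now given a proper holomorphic $f:(\C^n,0)\to(\C^n,0)$ with $D(f)=V$ and $\mu(f)=k$, I choose a representative and put $W:=f^{-1}(U)$, $W^{*}:=W\setminus f^{-1}(V)$. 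The hypothesis $D(f)=V$ says exactly that $f$ is a local biholomorphism over $U^{*}$, so $f|_{W^{*}}:W^{*}\to U^{*}$ is a finite unramified covering of degree $k=\mu(f)$; since the germ $(\C^n,0)$ is irreducible, $W^{*}$ is connected.

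Next I would invoke the classification of covering spaces: connected degree-$k$ coverings of $U^{*}$, up to isomorphism over $U^{*}$, correspond bijectively to conjugacy classes of index-$k$ subgroups of $\pi_{1}(U^{*})$ (equivalently, to transitive $\pi_{1}(U^{*})$-actions on a $k$-element set up to relabelling). A finitely generated group has only finitely many subgroups of any given finite index, so there are only finitely many isomorphism classes of such coverings; hence the assignment $f\mapsto[\,W^{*}_{f}\to U^{*}\,]$ takes only finitely many values.

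The step that needs the most care is to show that this invariant is a \emph{complete} invariant of the equivalence class of $f$. Suppose $f,g:(\C^n,0)\to(\C^n,0)$ are as in the statement and $h:W^{*}_{f}\to W^{*}_{g}$ is an isomorphism of coverings over $U^{*}$. Both source germs are normal and finite over $U$; being finite and generically $k$-to-one over $U$, each is the normalization of $U$ in the corresponding meromorphic function field, so the isomorphism of function fields over $\C(U)$ induced by $h$ extends, by uniqueness of normalization (or directly by Riemann's removable singularity theorem applied to the finite extension), to a finite bimeromorphic map $\Phi:(\C^n,0)\to(\C^n,0)$ with $f=g\circ\Phi$. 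A finite bimeromorphic selfmap of the normal germ $(\C^n,0)$ is a biholomorphism, and it carries the distinguished fibre over $0$ to itself; hence $f$ and $g$ are equivalent. Combining the three steps, the number of equivalence classes of $f$ with $D(f)=V$ and $\mu(f)=k$ is bounded by the finite number of conjugacy classes of index-$k$ subgroups of $\pi_{1}(U\setminus V)$.

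I expect the last step to be the real obstacle: recovering an automorphism of the full germ $(\C^n,0)$ from an isomorphism of the \emph{punctured} covers. Everything hinges on the fact that $f$ and $g$ are finite onto a normal target, so that the source is reconstructed as a normalization and the extension of $h$ across $f^{-1}(V)$ and across $0$ is automatic; one must also check that representatives can be arranged over a common small ball and that $\Phi$ fixes $0$, both routine once $U$ is fixed via the conical structure at the outset. Apart from this, the argument is the local transcription of the proof of Theorem~\ref{gl}: there the finite generation of $\pi_{1}(Y\setminus V)$ is a property of algebraic varieties, while here it is supplied by Milnor's theorem — which is precisely why the extra hypothesis in Remark~\ref{uwaga} is not needed locally.
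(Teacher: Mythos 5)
Your proposal is correct and follows essentially the same route as the paper: finite generation of $\pi_1(B(0,\rho)\setminus V)$ for a small ball, Hall's finiteness of index-$k$ subgroups of a finitely generated group, lifting to an isomorphism of the punctured covers, and extending that isomorphism across $f^{-1}(V)$ to a biholomorphism of the full germs (the paper does this last step via boundedness and the Riemann Extension Theorem, which is the same mechanism as your appeal to normalization). The only cosmetic difference is that you classify unbased coverings by conjugacy classes of index-$k$ subgroups, whereas the paper works with based coverings and the subgroups themselves; both yield the finiteness bound.
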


\begin{co}\label{wnhol}
Let $V=\{ x\in \C^n : x_1=0\}.$ Every proper  holomorphic mapping
$f:(\C^n,0) \to (\C^n,0)$ with $D(f)=(V,0)$ and $\mu(f)=k$ is
equivalent to the mapping $$g(x_1,x_2,\ldots, x_n)=(x_1^k,
x_2,\ldots, x_n).$$ In particular every proper holomorphic mapping
$f:(\C^n,0) \to (\C^n,0)$ with smooth discriminant  is weakly
equivalent to the mapping
$$g(x_1,x_2,\ldots, x_n)=(x_1^k, x_2,\ldots, x_n),$$
where $k=\mu(f).$
\end{co}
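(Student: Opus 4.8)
\emph{Proof plan.} The idea is to classify the unramified covering that $f$ determines over the complement of $V$ by means of its fundamental group, to identify this covering with the one produced by $g$, and then to extend the resulting biholomorphism across the ramification locus.

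Fix coordinates so that $V=\{x_1=0\}$, choose a small polydisc $P=\{|x_i|<\varepsilon\}$ on which a representative of $f$ is defined and $f^{-1}(P)$ lies in the domain, and set $U=P\setminus V$. Since $D(f)=V$, over $U$ the map $f$ has no critical value, so $f|_{f^{-1}(U)}:f^{-1}(U)\to U$ is a proper local biholomorphism onto the connected set $U$, hence a covering of degree $\mu(f)=k$. As $f$ is finite, $x_1\circ f$ is not identically zero, so $f^{-1}(V)$ is a hypersurface and its complement $f^{-1}(U)$ is connected; thus $f^{-1}(U)\to U$ is a \emph{connected} $k$-sheeted covering. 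Now $U\cong\Delta^*\times\Delta^{n-1}$ is homotopy equivalent to $S^1$, so $\pi_1(U)\cong\Z$, which has a unique subgroup of index $k$; hence the connected $k$-fold covering of $U$ is unique up to isomorphism. Since $g(x_1,\dots,x_n)=(x_1^k,x_2,\dots,x_n)$ is proper holomorphic with $D(g)=V$, $\mu(g)=k$, and $g$ restricts over $U$ to the connected $k$-fold cover $(x_1,x')\mapsto(x_1^k,x')$, there is an isomorphism of coverings, that is, a biholomorphism $\Phi_0:f^{-1}(U)\to g^{-1}(U)$ with $g\circ\Phi_0=f$. (Theorem \ref{glhol} already bounds the number of equivalence classes; this $\pi_1$ computation shows the bound is $1$.)

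Next I would extend $\Phi_0$ across $f^{-1}(V)$. The point is that $\Phi_0$ is locally bounded near $f^{-1}(V)$: if $x_m\to x_0\in f^{-1}(V)$, then $g(\Phi_0(x_m))=f(x_m)\to f(x_0)\in V$ stays bounded, and finiteness of $g$ keeps the $\Phi_0(x_m)$ in a compact set. Since $f^{-1}(V)$ is a hypersurface in the smooth space $\C^n$, the Riemann extension theorem for locally bounded holomorphic functions yields a holomorphic extension $\Phi$ near $0$, and the same argument applied to $\Phi_0^{-1}$ yields a holomorphic extension $\Psi$. On the dense open set $f^{-1}(U)$ we have $\Psi\circ\Phi=\mathrm{id}$, $\Phi\circ\Psi=\mathrm{id}$ and $g\circ\Phi=f$, so these hold everywhere by continuity; also $g(\Phi(0))=f(0)=0$ and $g^{-1}(0)=\{0\}$ force $\Phi(0)=0$. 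Hence $\Phi:(\C^n,0)\to(\C^n,0)$ is a biholomorphism germ with $f=g\circ\Phi$, so $f$ is equivalent to $g$.

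For the last assertion, suppose only that the germ $D(f)=(W,0)$ is smooth. By the implicit function theorem there is a biholomorphism germ $\Psi:(\C^n,0)\to(\C^n,0)$ with $\Psi(W)=\{x_1=0\}$. Then $\Psi\circ f$ is proper holomorphic with $D(\Psi\circ f)=\{x_1=0\}$ and $\mu(\Psi\circ f)=\mu(f)=k$, so by the first part $\Psi\circ f=g\circ\Phi$ for some biholomorphism germ $\Phi$, i.e. $g=\Psi\circ f\circ\Phi^{-1}$; thus $f$ is weakly equivalent to $g$. The step I expect to require the most care is the extension of $\Phi_0$: since $f^{-1}(V)$ has codimension one one cannot invoke the codimension-two (second) Riemann extension theorem, and one must instead establish local boundedness of $\Phi_0$ and apply the bounded version; the covering classification and the germ-theoretic bookkeeping are routine.
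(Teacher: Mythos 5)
Your proposal is correct and follows essentially the same route as the paper: compute $\pi_1$ of the punctured neighbourhood $B(0,\rho)\setminus V\simeq S^1$ to get a unique index-$k$ subgroup of $\Z$, lift to a covering isomorphism with the model map $g$, extend across $f^{-1}(V)$ by local boundedness and the Riemann extension theorem, and straighten a smooth discriminant for the weak-equivalence statement. The paper packages this as an appeal to the proofs of Theorem \ref{glhol} and Corollary \ref{wn}, but the content is identical.
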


\begin{co}\label{wn2hol}
Let $K_n=\{ x\in \C^n : \prod^n_{i=1} x_i=0\}.$ Every proper
holomorphic mapping $f:(\C^n,0) \to (\C^n,0)$ with $D(f)=(K_n,0)$
and $\mu(f)=k$ is equivalent to one of  the mappings
$$f_{d_1,\ldots,d_n}:\C^n\ni (x_1, \ldots, x_n)\mapsto (x_1^{d_1}, \ldots,
x_n^{d_n})\in \C^n,$$ where $\prod^n_{i=1} d_i=k$ and all $d_i>1.$
In particular  every proper holomorphic mapping $f:(\C^n,0) \to
(\C^n,0)$ with $D(f)$ biholomorphic to $(K_n,0)$ and $\mu(f)=k$ is
weakly equivalent to one of these mappings.

Hence if $k$ is a prime number and $n>1$, then there is no proper
holomorphic  mapping $f:(\C^n,0) \to (\C^n,0)$ with $D(f)$
biholomorphic to $(K_n,0)$ and $\mu(f)=k.$
\end{co}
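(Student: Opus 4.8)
The plan is to study $f$ through the unramified covering it induces off its discriminant, to exploit the fact that the local fundamental group of the complement of $K_n$ is abelian, and then to use the smoothness of the source to pin down the resulting deck group.

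First I would pass to small polydisc representatives and set $U:=\Delta^n\setminus K_n=(\Delta^*)^n$, so that $f^{-1}(U)\to U$ is an unramified covering of degree $k=\mu(f)$. Since $f^{-1}(0)=\{0\}$ and the source $X=(\C^n,0)$ is irreducible and normal, $f^{-1}(U)=X\setminus f^{-1}(K_n)$ is connected, so the covering corresponds to a subgroup $H\subset\pi_1(U)\cong\Z^n$ of index $k$. As $\Z^n$ is abelian, $H$ is normal, so the covering is Galois with finite abelian deck group $A:=\Z^n/H$, $|A|=k$. The $A$-action on $f^{-1}(U)$ extends, by normality of $X$ and Riemann extension, to a holomorphic action on $X$ over the target; the quotient $X/A$ is normal, and $X/A\to(\C^n,0)$ is finite and birational onto a normal germ, hence an isomorphism, so $f$ is identified with the quotient map $X\to X/A$ and $(\C^n,0)\cong X/A$.

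Next, because $f^{-1}(0)=\{0\}$, the orbit of the point over $0$ is a single point, so all of $A$ fixes it; being finite, $A$ linearizes near this fixed point (Cartan), so we may take $A\subset GL_n(\C)$ with $X/A\cong(\C^n,0)$ smooth. By the Chevalley--Shephard--Todd theorem $A$ is then generated by pseudo-reflections. A finite abelian subgroup of $GL_n(\C)$ is simultaneously diagonalizable, and a nontrivial diagonal pseudo-reflection has exactly one nontrivial eigenvalue; hence in a suitable basis $A=\mu_{d_1}\times\cdots\times\mu_{d_n}$ acting coordinatewise, and in these coordinates the quotient map is $(x_1,\dots,x_n)\mapsto(x_1^{d_1},\dots,x_n^{d_n})$ with $\prod_{i=1}^n d_i=|A|=k$. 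Thus $f$ is weakly equivalent to $f_{d_1,\dots,d_n}$. The branch divisor of $f_{d_1,\dots,d_n}$ is $\{\prod_{d_i>1}x_i=0\}$, so $D(f)=K_n$ forces $d_i>1$ for every $i$; moreover the automorphism of $(\C^n,0)$ relating the given target coordinates to the normal-form ones must carry $K_n$ to $K_n$, hence permute and rescale the coordinate hyperplanes, and as in the proof of Corollary~\ref{wn3} this can be absorbed into a source automorphism, so $f$ is in fact equivalent (not merely weakly equivalent) to one of the $f_{d_1,\dots,d_n}$. For the "in particular" part, when $D(f)$ is only biholomorphic to $K_n$ one first applies a biholomorphism of the target taking $D(f)$ to $K_n$, reducing to the previous case and giving weak equivalence; and if $k$ is prime and $n>1$, then $\prod d_i=k$ with all $d_i>1$ has no solution, so no such $f$ exists.

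The main obstacle is the middle step, where the smoothness of both $X$ and the quotient $X/A\cong(\C^n,0)$ is converted into the diagonal monomial form of $A$: everything there rests on genuinely having the quotient presentation $f\colon X\to X/A$ — which requires the normality and Riemann-extension bookkeeping to be carried out carefully in the germ setting — and on the fact that $A$ fixes the unique point over $0$; granting these, Chevalley--Shephard--Todd together with simultaneous diagonalizability of finite abelian subgroups of $GL_n(\C)$ does the rest. A secondary, purely combinatorial point is the final upgrade from weak to strong equivalence, which merely repeats the argument already used in the algebraic case in Corollary~\ref{wn3}.
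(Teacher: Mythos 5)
Your proof is correct, but it takes a genuinely different route from the paper's. The paper treats this corollary as a direct application of Theorem \ref{glhol}: it notes that $U_\rho=B(0,\rho)\setminus K_n$ deformation retracts onto $(\C^*)^n$, so $G=\pi_1(U_\rho)\cong\Z^n$, and then repeats the argument of Corollary \ref{wn3}, where a Mayer--Vietoris computation shows that the only index-$k$ subgroups of $\Z^n$ arising as $f_*\pi_1(P_f)$ for such an $f$ are the split ones $H(d_1,\dots,d_n)=H_1\oplus\cdots\oplus H_n$; these are exactly realized by the monomial maps, and the uniqueness argument of Theorem \ref{glhol} (lifting to a homeomorphism of the covers and extending by Riemann) then yields equivalence directly. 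You instead exploit that $\Z^n$ is abelian to get a Galois covering, extend the deck group $A$ to the source germ, identify $f$ with the quotient $X\to X/A$, linearize $A$ at its fixed point by Cartan's theorem, and invoke Chevalley--Shephard--Todd together with simultaneous diagonalizability to force $A=\mu_{d_1}\times\cdots\times\mu_{d_n}$. Your route buys a conceptual explanation of why only the split subgroups can occur (smoothness of source and target forces it via reflection groups), in place of the paper's Mayer--Vietoris computation; the cost is heavier machinery and the fact that the quotient presentation a priori gives only weak equivalence, so you need the final ``absorption'' step to recover the equivalence asserted in the first sentence of the corollary. That step is true but under-justified as written: the proof of Corollary \ref{wn3} contains no such absorption argument, so you should either carry it out explicitly (a target biholomorphism preserving $K_n$ is a coordinate permutation $\tau$ composed with $x_i\mapsto x_iv_i(x)$, $v_i(0)\neq 0$, and the units are absorbed into a source automorphism by extracting holomorphic $d_i$-th roots of $v_i\circ f_{d_1,\dots,d_n}$), or simply observe that your normal form and the appropriate $f_{d_1,\dots,d_n}$ have the same discriminant $K_n$ and induce the same subgroup of $\Z^n$, hence are equivalent by the argument of Theorem \ref{glhol}.
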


We  also have the following interesting:

\begin{theo}\label{lamhol}
Let $f:(\C^n,0) \to (\C^n, 0)$ be a proper  holomorphic mapping
with $\mu(f)=2.$ Then  $f$ is weakly equivalent to the mapping
$$g:(\C^n,0) \ni (x_1, x_2, \ldots, x_n)\mapsto (x_1^2, x_2,\ldots, x_n)\in (\C^n,0).$$
In particular the discriminant of $f$ is smooth.
\end{theo}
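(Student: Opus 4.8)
The plan is to exploit the deck involution of the double cover and reduce the statement to an invariant-theoretic factorization through $g$; along the way the degree-two hypothesis will be seen to force $D(f)$ to be smooth, so that Corollary~\ref{wnhol} also applies. Choose proper representatives $f\colon U\to V$ of the germ with $V$ a ball. Over $V\setminus D(f)$ the map $f$ is a proper local biholomorphism, hence a connected double covering ($V\setminus D(f)$ is connected because $V$ is a ball and $D(f)$ a hypersurface; note that $f$ is genuinely ramified, as a degree-two germ cannot be a local biholomorphism). A subgroup of index two being normal, this covering is Galois with group $\Z/2$; let $\sigma_0$ be its deck transformation, a holomorphic involution of $U\setminus C(f)$, where $C(f)=f^{-1}(D(f))$ is the critical locus. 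Properness of $f$ keeps $\sigma_0$ bounded near $C(f)$, and the source germ being smooth, hence normal, Riemann's removable-singularity theorem applied to the components of $\sigma_0$ extends it to a holomorphic involution $\sigma\colon(\C^n,0)\to(\C^n,0)$ with $f\circ\sigma=f$. Since for $\mu(f)=2$ the fibre over a critical value is a single critical point while over a regular value $\sigma$ interchanges the two sheets, one obtains $\operatorname{Fix}(\sigma)=C(f)$.

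By Cartan's linearization theorem a holomorphic involution of $(\C^n,0)$ is biholomorphically conjugate to a linear one, and every linear involution $\neq\operatorname{id}$ is linearly conjugate to $\tau_r(x)=(-x_1,\dots,-x_r,x_{r+1},\dots,x_n)$ for some $r\geq 1$. Hence, after replacing $f$ by $f\circ\Phi$ for a suitable biholomorphism $\Phi$, we may assume $\sigma=\tau_r$, so that $C(f)=\operatorname{Fix}(\sigma)=\{x_1=\dots=x_r=0\}$. Because $f$ is finite, $\dim C(f)=\dim f(C(f))=\dim D(f)=n-1$, and therefore $r=1$: thus $\sigma$ is the reflection $\tau(x_1,x_2,\dots,x_n)=(-x_1,x_2,\dots,x_n)$ and $f$ is $\tau$-invariant.

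Each component of $f$ is then invariant under $x_1\mapsto-x_1$, hence a convergent power series in $x_1^2,x_2,\dots,x_n$; this means $f=h\circ g$ with $g(x_1,x_2,\dots,x_n)=(x_1^2,x_2,\dots,x_n)$ and $h\colon(\C^n,0)\to(\C^n,0)$ holomorphic. Multiplicativity of the topological degree gives $2=\mu(f)=\mu(h)\mu(g)=2\mu(h)$, so $\mu(h)=1$; a finite holomorphic map germ of degree one between smooth germs is a biholomorphism (its pushforward structure sheaf is a finite $\mathcal{O}_{\C^n,0}$-module squeezed between $\mathcal{O}_{\C^n,0}$ and its fraction field, hence equals the integrally closed ring $\mathcal{O}_{\C^n,0}$). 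Setting $\Psi=h^{-1}$ we get $\Psi\circ f\circ\Phi=g$, so $f$ is weakly equivalent to $g$; moreover $D(f)=h(D(g))=h(\{x_1=0\})$ is smooth because $h$ is a biholomorphism. (Alternatively, once $D(f)$ is known to be smooth, one may finish directly by Corollary~\ref{wnhol}.)

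The main obstacle is the first step, i.e.\ producing $\sigma$ in the purely local, holomorphic, merely proper category: one must check that the covering over $V\setminus D(f)$ really is Galois — which is automatic here because the index is two — and, more delicately, that its deck transformation, defined a priori only off the hypersurface $C(f)$, genuinely extends across $C(f)$; here properness supplies the boundedness needed for Riemann extension and smoothness (normality) of the source supplies the extension itself. The remaining ingredients — Cartan's linearization of the finite-order germ symmetry, the invariant-power-series factorization, and the identification of a degree-one finite germ with a biholomorphism — are routine.
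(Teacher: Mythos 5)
Your proposal is correct and follows essentially the same route as the paper: construct the deck involution of the degree-two cover, extend it across the critical locus by properness and Riemann extension, linearize it by Cartan's theorem, identify the fixed locus as a hyperplane (your dimension count for $r=1$ replaces the paper's Lemma 5.1 in substance), and factor $f$ through $(x_1,\dots,x_n)\mapsto(x_1^2,x_2,\dots,x_n)$ via invariance — your even-power-series factorization is just the local form of the paper's invariant-ring argument $\C[x]^G=\C[x_1^2,x_2,\dots,x_n]$. The only cosmetic difference is that you justify more explicitly that a finite degree-one germ is a biholomorphism.
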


\vspace{5mm}  One can also prove an algebraic counterpart of
Theorem \ref{lamhol}. To do this we need:

\vspace{3mm}

\noindent {\bf Linearization Conjecture}. {\it Let $\Phi : \C^n\to
\C^n$ be a polynomial automorphism  of  order two. Then there
exists an automorphism $\Psi:\C^n\to \C^n$ such that
$$\Psi^{-1}\circ\Phi\circ \Psi \in GL(n).$$}
\noindent This Conjecture is true in dimension $2$ (and $n=1$ of
course) - see \cite{kam}, but unfortunately it is completely open
in higher dimensions. We have the following generalization of the
Lamy Theorem:

\begin{theo}\label{lam}
Let $f:\C^n \to\C^n$ be a proper polynomial  mapping with
$\mu(f)=2.$ If the Linearization Conjecture is true, then $f$ is
weakly equivalent to the mapping $$g:\C^n \ni (x_1, x_2, \ldots,
x_n)\mapsto (x_1^2, x_2,\ldots, x_n)\in\C^n.$$
\end{theo}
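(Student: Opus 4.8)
\textbf{Proof proposal for Theorem \ref{lam}.}

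The plan is to combine Corollary \ref{wn2} (or rather its proof) with the Linearization Conjecture. Since $f:\C^n\to\C^n$ is a proper polynomial mapping with $\mu(f)=2$, Corollary \ref{wn2} provides a regular automorphism $\sigma:\C^n\to\C^n$ of order two with $f\circ\sigma=f$, and the fixed-point set of $\sigma$ equals the critical set $C(f)$. By Corollary \ref{wniloraz} the map $f$ is equivalent to the quotient projection $\pi:\C^n\to\C^n/G$ where $G=\{\mathrm{id},\sigma\}$; in particular $\C^n/G$ is isomorphic to $\C^n$ (it is smooth because $Y=\C^n$ is the target, being normal and receiving a degree-two finite morphism from the smooth $\C^n$). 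So after replacing $f$ by an equivalent mapping we may assume $f=\pi$ is literally the quotient map by the involution $\sigma$.

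Now invoke the Linearization Conjecture: there is an automorphism $\Psi\in\operatorname{Aut}(\C^n)$ with $\Psi^{-1}\circ\sigma\circ\Psi=:L\in GL(n)$, and $L^2=\mathrm{id}$. A finite-order element of $GL(n,\C)$ is diagonalizable, and an involution has eigenvalues $\pm 1$; conjugating $L$ further by a linear map we may assume $L=\mathrm{diag}(\varepsilon_1,\ldots,\varepsilon_n)$ with each $\varepsilon_i\in\{1,-1\}$. Since $\sigma\neq\mathrm{id}$ (otherwise $\mu(f)=1$), at least one $\varepsilon_i=-1$; and since the fixed locus $\mathrm{Fix}(\sigma)=C(f)$ is a hypersurface (it is the zero set of the Jacobian, which is nonzero as $f$ is generically \'etale), exactly one $\varepsilon_i=-1$ — say, after permuting coordinates, $L=\mathrm{diag}(-1,1,\ldots,1)$, i.e. $L(x_1,x_2,\ldots,x_n)=(-x_1,x_2,\ldots,x_n)$.

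It remains to identify the quotient $\C^n/G$ and the projection with $g$. Conjugating $\sigma$ to $L$ by $\Psi$ replaces the $G$-action on the source by the $L$-action; the invariant ring $\C[x_1,\ldots,x_n]^{L}=\C[x_1^2,x_2,\ldots,x_n]$ is a polynomial ring, so $\C^n/\langle L\rangle\cong\C^n$ and the quotient projection is, up to the identification of $\C^n/\langle L\rangle$ with $\C^n$ via the generators $x_1^2,x_2,\ldots,x_n$, exactly the map $g(x_1,\ldots,x_n)=(x_1^2,x_2,\ldots,x_n)$. Tracking all the isomorphisms — the equivalence $f\sim\pi$ from Corollary \ref{wniloraz}, the conjugation by $\Psi$ on the source, and the coordinate change on the target induced by passing from $\C^n/G$ to $\C^n/\langle L\rangle$ to $\C^n$ — assembles into automorphisms $\Phi\in\operatorname{Aut}(\C^n)$ (source) and $\Psi'\in\operatorname{Aut}(\C^n)$ (target) with $\Psi'\circ f\circ\Phi=g$, which is the asserted weak equivalence. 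The final sentence, that the discriminant of $f$ is smooth, is then immediate since weak equivalence preserves smoothness of the discriminant and $D(g)=\{x_1=0\}$; alternatively it already follows from Theorem \ref{lamhol} applied locally.

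\textbf{Main obstacle.} The only genuinely nontrivial input is the Linearization Conjecture itself, which is the hypothesis — everything else is bookkeeping of the isomorphisms supplied by Corollaries \ref{wn2} and \ref{wniloraz} together with the elementary classification of linear involutions. The one point requiring a small argument is that exactly one eigenvalue of $L$ equals $-1$: this is where one uses that the critical set of a degree-two proper map between smooth $n$-folds is a hypersurface (not lower-dimensional and not all of $\C^n$), forcing $\mathrm{Fix}(L)=\{x_1=0\}$ to have codimension one.
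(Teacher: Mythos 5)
Your proof is correct and follows essentially the same route as the paper: Corollary \ref{wn2} supplies the involution $\sigma$ with $f\circ\sigma=f$ and $\mathrm{Fix}(\sigma)=C(f)$, the Linearization Conjecture reduces $\sigma$ to the linear involution $L=\mathrm{diag}(-1,1,\ldots,1)$ (the paper's Lemma \ref{lemat} plays the role of your eigenvalue count, both resting on the fixed locus being a hypersurface), and the quotient $\C^n/\langle L\rangle\cong\C^n$ identifies $f$ with $g$. The only cosmetic differences are that the paper verifies directly that $\tilde f=f\circ\pi^{-1}$ is a degree-one polynomial map, hence an automorphism, instead of routing through Corollary \ref{wniloraz}, and that your closing remark about a ``final sentence'' on smoothness of the discriminant pertains to Theorem \ref{lamhol}, not to the statement of Theorem \ref{lam}.
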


\noindent Sections 2 and 3  are devoted to the proofs of Theorem
\ref{gl} and Corollaries 1.4-1.11. In Section 4 we prove Theorem
\ref{glhol}  and Corollaries \ref{wnhol}-\ref{wn2hol}. In section
5 we prove Theorems \ref{lamhol} and  \ref{lam}.

\begin{re}\label{normal}
{\rm Our methods also work if we do not assume that $Y$ is smooth
and we define the discriminant of a finite mapping $f: X\to Y$ as
$D(f)=\{ y\in Y : y\in {\rm Sing}(Y) \ {\rm or}\ \# f^{-1}(y)<
\mu(f)\}.$}
\end{re}

\section{Proof of Theorem \ref{gl}}
Let $V\subset X$ be a hypersurface. It is well known that the
fundamental group of a smooth algebraic variety  is finitely
generated. In particular the group $\pi_1(X\setminus V)$ is
finitely generated. Let us recall the following result of M. Hall
(see  \cite{hal}, \cite{kur}):

\begin{lem}\label{hall}
Let $G$ be a finitely generated group and let $k$ be a natural
number. Then there are only finite number of subgroups $H\subset
G$ such that $[G:H]=k.$
\end{lem}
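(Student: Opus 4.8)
The plan is to invoke the classical correspondence between index-$k$ subgroups of $G$ and transitive permutation representations of $G$ on $k$ points equipped with a marked point, and to combine it with the elementary fact that a finitely generated group admits only finitely many homomorphisms into any fixed finite group.

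First I would fix a finite generating set $g_1,\dots,g_n$ of $G$ and note that any homomorphism $\varphi:G\to S_k$ into the symmetric group on $k$ letters is completely determined by the tuple $(\varphi(g_1),\dots,\varphi(g_n))\in (S_k)^n$. Hence the set $\operatorname{Hom}(G,S_k)$ has at most $(k!)^n$ elements, in particular it is finite.

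Next, to each subgroup $H\subset G$ with $[G:H]=k$ I would associate a homomorphism $\rho_H:G\to S_k$: let $G$ act by left translation on the set $G/H$ of left cosets, choose a bijection $G/H\to\{1,\dots,k\}$ carrying the trivial coset $eH$ to $1$, and let $\rho_H$ be the resulting permutation representation. The \emph{key point} is that $H$ is recovered from $\rho_H$ intrinsically, namely $H=\{g\in G:\rho_H(g)\cdot 1=1\}$ is the preimage of the stabilizer of the point $1$. Consequently, once a bijection has been chosen for each $H$, the assignment $H\mapsto\rho_H$ is injective.

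Combining the two steps, the number of index-$k$ subgroups of $G$ is bounded above by $\#\operatorname{Hom}(G,S_k)\le (k!)^n<\infty$, which proves the lemma. The argument is essentially formal and there is no serious obstacle; the only step requiring a little care is setting up the map $H\mapsto\rho_H$ so that it is genuinely injective, which forces one to remember that a subgroup is recovered from its coset action as a point stabilizer, not merely as a conjugacy class of subgroups.
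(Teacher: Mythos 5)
Your argument is correct and complete: the marked-point coset representation $H\mapsto\rho_H$ is injective precisely because $H$ is recovered as the preimage of the stabilizer of $1$, and $\#\operatorname{Hom}(G,S_k)\le (k!)^n$ for an $n$-generated group. The paper does not prove this lemma at all --- it simply cites M.~Hall and Kurosh --- and your proof is the standard argument behind that citation, so there is nothing to compare beyond noting that you have supplied the expected details.
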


 \noindent Every finite
mapping $f: X\to Y$ with $D(f)=V$ and $\mu(f)=k$ induces a
topological covering $f:X\setminus f^{-1}(D(f))=P_f\to
R=Y\setminus D(f)$. Take a point $a\in R$ and let $a_f\in
f^{-1}(a).$ We have an induced homomorphism
$$f_* :\pi_1(P_f, a_f)\to \pi_1(R,a).$$
Denote $H_f=f_*(\pi_1(P_f, a_f))$ and $G=\pi_1(R,a).$ Hence
$[G:H_f]=k.$ By Lemma \ref{hall}  there are only a finite number
of subgroups $H_1,\ldots, H_r\subset G$ with index $k.$ Choose
finite regular mappings $f_i : X\to Y$ such that $H_{f_i}=H_i$ (of
course only if such a mapping $f_i$  does exist). We show that
every finite regular mapping $f$ such that $D(f)=V$ and $\mu(f)=k$
is equivalent to one of the mappings $f_i.$

Indeed, let $H_f=H_{f_i}.$ We show that $f$ is equivalent to
$f_i.$ Let us consider two coverings $f: (P_f, a_f)\to (R,a)$ and
$f_i: (P_{f_i}, a_{f_i})\to (R,a).$ Since $f_*(\pi_1(P_f,
a_f))={f_i}_*(\pi_1(P_{f_i}, a_{f_i}))$ we can lift the covering
$f$ to a homeomorphism $\phi: P_f\to P_{f_i}$ such that the
following diagram commutes:

\begin{center}
\begin{picture}(240,120)(-40,40)
\put(180,160){\makebox(0,0)[tl]{$(P_{f_i}, a_{f_i})$}}
\put(20,40){\makebox(0,0)[tl]{$(P_f,a_f)$}}
\put(180,40){\makebox(0,0)[tl]{$(R,a)$}}
\put(190,100){\makebox(0,0)[tl]{$f_i$}}
\put(95,50){\makebox(0,0)[tl]{$f$}}
\put(80,100){\makebox(0,0)[tl]{$\phi$}}
\put(65,35){\vector(1,0){110}} \put(40,45){\vector(4,3){130}}
\put(183,145){\vector(0,-1){100}}
\end{picture}
\end{center}
\vspace{15mm}

\noindent Note that $\phi: P_f\to P_{f_i}$ is a single-valued
branch of the multi-valued holomorphic mapping $f_i^{-1}\circ f.$
In particular $\phi$ is a holomorphic mapping.

Now we show that $\phi$ has a unique extension to a regular
mapping on the whole of $X.$ Let $x\in X$ and $y=f(x).$ Let $U$ be
an affine neighborhood of $y$ in $Y$ and take $X_1=f^{-1}(U)$,
$X_2=f_i^{-1}(U).$ Since $f,f_i$ are finite, the varieties $X_1,
X_2$ are  affine. Moreover, $\phi(X_1)\subset X_2.$

Since $X_2$ is affine, we have an embedding $X_2\subset \C^N$ for
$N$ large enough. Consider the mapping  $\phi': P_f\cap X_1\ni
x\mapsto \phi(x)\in X_2 \subset \C^N.$ Since the mappings $f$ and
$f_i$ are proper the mapping $\phi'$ is locally bounded on $X_1.$
Hence by the Riemann Extension Theorem, $\phi'$ can be extended to
a holomorphic mapping $\Phi_{|X_1}: X_1\to \C^N$. Of course we
have $\Phi_{|X_1}(X_1)\subset X.$ Gluing all possible mappings
$\Phi_{|X_1}$ we get a global holomorphic extension $\Phi: X\to X$
of the mapping $\phi.$ Moreover we still have $f=f_i\circ \Phi.$

Finally, for fixed $X_1$ the graph of $\Phi_{|X_1}$ is a closed
irreducible analytic $n$-dimensional subset of the $n$-dimensional
affine variety $\Gamma=X_1\times_{Y} X_2$ (the fiber product given
by the mappings $f$ and $f_i$). Consequently, the ${\rm
graph}(\Phi_{|X_1})$ has to coincide with some irreducible
component of $\Gamma$, i.e., ${\rm graph}(\Phi)$ is an algebraic
subset of $X_1\times X_2.$ By the Serre Theorem about the
algebraic graph (see e.g., \cite{loj}, p. 342) we know that
$\Phi_{|X_1}$ is a regular mapping. Hence also  $\Phi$ is regular.
In a similar way the mapping $\Psi$ determined by $\phi^{-1}$ is
regular. It is easy to see that $\Psi\circ\Phi=\Phi\circ
\Psi=identity$, hence $\Phi$ is a regular automorphism.
Consequently, $f$ is equivalent to $f_i.$ $\square$

\begin{re}
{\rm It is easy to see from the proof that the number of
non-equivalent finite regular mappings $f: X\to Y$ such that
$D(f)=V$ and $\mu(f)=k$ is bounded by the number of subgroups of
index $k$ of the group $\pi_1(Y\setminus V).$}
\end{re}

\begin{re}
{\rm Assume that $X,Y$ are holomorphic and additionally the group
$\pi_1(Y\setminus V)$ is finitely generated. Then the holomorphic
counterpart of Theorem \ref{gl} is still true (this confirms
Remark \ref{uwaga}). The proof is the same as above, with one
exception: we have to prove in a different way that the mapping
$\phi$ has a holomorphic extension to the whole of $X.$ To do it
take a point $x\in f^{-1}(V)$ and let $y=f(x).$ The set
$f_i^{-1}(y)=\{ b_1,..., b_s\}$ is finite. Take small open
disjoint neighborhoods $W_i$ of $b_i$, such that each $W_i$ is
biholomorphic to a small ball in $\C^n$ (here $n={\rm dim} \ X$).
We can choose an
 open neighborhood $Q$ of $y$ so small that
$f_i^{-1}(Q)\subset \bigcup^s_{j=1} W_i.$  Now take a small
connected neighborhood $P_x$ of $x$ such that $P_x$ is
biholomorphic to a small ball in $\C^n$ and $f(P_x)\subset Q.$ The
set $P_x\setminus f^{-1}(V)$ is still connected and it is
transformed by $\phi$ into one particular set $W_{i_0}.$ Now we
can use the Riemann Extension Theorem.}
\end{re}

\section{Proof of Corollaries  1.4-1.11}

\noindent {\it Proof of Corollary \ref{wn}}. Throughout this
section, we keep notation of Section 2. Note that in this case
$G=\Bbb Z$ and for every $k$ there is only one subgroup of $G$
with index $k.$ In particular we can choose $f_1$ as
$$f_1(x_1,x_2,\ldots, x_n)=(x_1^k, x_2,\ldots, x_n).$$ Arguing as above
we see that every proper polynomial (holomorphic) mapping $f:\C^n
\to\C^n$ with $D(f)=V$ and $\mu(f)=k$ is equivalent to $f_1.$
$\square$

\vspace{5mm} \noindent {\it Proof of Corollary \ref{wn1}.} In this
case again we have $G=\Bbb Z.$ Let $f,g: X\to \C^n$ be proper
mappings with $D(f)=D(g)$ and $\mu(f)=\mu(g)=k.$ Since $G$ has
only one subgroup of index $k,$ we see as before that $f$ is
equivalent to $g.$ In the case of Stein manifolds use the fact,
that $R:=\C^n \setminus V$ is Stein, and since an analytic
covering of a Stein space is again Stein, we  also see  that
$P_f=X\setminus f^{-1}(V)$ and $P_g=X\setminus g^{-1}(V)$ are
Stein. Then one follows the proof of Theorem \ref{gl}.

Moreover, since the group $H_f$ is normal in $G,$ the covering $f:
P_f \to R$ is regular. In particular, the covering group
$G(P_f|R)$ acts transitively on $P_f.$ Since we know all covers of
the space $\C^n\setminus V=\C^*\times \C^{n-1}$ (it has the
homotopy type of a circle), we see that this group is cyclic. Now
take as $\sigma$ the generator of the group $G(P_f|R).$ As before,
$\sigma$ is a single-valued branch of the multi-valued holomorphic
function $f^{-1}\circ f,$  hence it is holomorphic. Again we can
extend $\sigma$ to the whole of $X$. Moreover, if $f$ is a regular
mapping, then (as above) so is $\sigma.$ $\square$

\vspace{5mm} \noindent {\it Proof of Corollary \ref{wn2}.} Let $X$
be an algebraic (resp. holomorphic) manifold. As above, the
mapping $f$ induces a topological regular covering $f:
P_f=X\setminus f^{-1}(V)\to R=Y\setminus V.$ For a point $a\in R$
let $a_1,a_2$ be two different points in the  fiber $f^{-1}(a).$
Since $f$ is a regular covering (the topological degree is two)
there exists a covering homeomorphism $\phi: P_f\to P_f$ such that
$\phi(a_1)=a_2.$ As before, the mapping $\phi$ is regular (resp.
holomorphic).

 Let
$x\in X$ be such that $y=f(x)\in V,$ in particular $f^{-1}(y)=x.$
Let $U\cong B(0,r)$ be a small neighborhood of $x$ in $X.$ Since
$f\circ \phi=f$ and $f$ is finite, there is a small neighborhood
$U'$ of $x$ such that $\phi(U'\setminus f^{-1}(V))\subset U.$ The
mapping $\phi: U'\setminus f^{-1}(V)\to U\cong B(0,r)\subset \C^n$
is bounded, hence by the Riemann Theorem it can be extended to a
holomorphic mapping $\Phi : U'\to U$ and $\Phi(x)=x$ for $x\in
f^{-1}(V)$. This implies  $Fix(\Phi)=C$ ( $C$ is the set of
critical points of $f$). $\square$

\vspace{5mm} \noindent {\it Proof of Corollary \ref{wniloraz}.} In
virtue of Remark \ref{normal} we have that there is a non-trivial
automorphism $\Phi: X\to X$ such that $f\circ\Phi=f.$ Take
$G=\{id, \Phi\}.$  Since the mapping $f$ is $G$-equivariant, we
have a well-defined morphism $f': X/G\ni [x]\mapsto f(x)\in Z.$ By
the assumption, the mapping $f'$ is a bijection. By the Zariski
Main Theorem (resp. by its holomorphic analogon), we have that
$f'$ is an isomorphism. Hence the mapping $f={f'}^{-1}\circ \pi$
is equivalent to the projection $\pi.$ $\square$

\begin{re}
{\rm We have used here the fact that every subgroup of $G$ of
index two is normal.  Note that this purely algebraic property is
one of the reasons why the case $\mu(f)=2$ is so special.}
\end{re}

\vspace{5mm} \noindent {\it Proof of Corollary \ref{wn3}}. Note
that $\C^n\setminus K_n\cong \prod^n_{i=1} \C^*$, hence $G=\Bbb Z
\oplus \ldots \oplus \Bbb Z$ ($n$ times). It is easy to see that
every subgroup of index $k$ in $G$, which comes from finite
mapping $f$  has the form $H(d_1,\ldots,d_n):=H_1\oplus \ldots
\oplus H_n,$ where $(\Bbb Z: H_i)=d_i$ and $\prod^n_{i=1} d_i=k.$
Indeed, take $F_i=\{ x\in \C^n: f_i=0 \}$ and  $W_i=\{ x\in \C^n :
x_i=0\}.$ Consider  groups $H_1(\C^n\setminus \bigcup^n_{i=1}
W_i)\cong \pi_1(\C^n\setminus K_n)= \Bbb Z^n$ and 
$H_1(\C^n\setminus \bigcup^n_{i=1} F_i)\cong
f_*(\pi_1(\C^n\setminus \bigcup^n_{i=1} F_i)\subset G= \Bbb Z^n.$
This implies that $H_1(\C^n\setminus \bigcup^n_{i=1} F_i)=\Bbb
Z^n.$ By the Mayer-Vietoris sequence we have
$$H_1(\C^n\setminus \bigcup^n_{i=1} F_i)=\bigoplus^n_{i=1}
H_1(\C^n\setminus F_i)$$ and $$H_1(\C^n\setminus
K_n)=\bigoplus^n_{i=1} H_1(\C^n\setminus W_i).$$ The induced
mapping $H_1(f): \bigoplus^n_{i=1} H_1(\C^n\setminus F_i)\to
\bigoplus^n_{i=1} H_1(\C^n\setminus W_i)$ coincide with the
mapping $H_1(f): H_1(\C^n\setminus \bigcup^n_{i=1} F_i)\to
H_1(\C^n\setminus K_n)$ and shows that indeed the group
$H(d_1,..., d_n)$ has a desired form.

The mapping $f_{d_1,\ldots,d_n} : \C^n \ni ( x_1,\ldots,
x_n)\mapsto (x_1^{d_1},\ldots, x_n^{d_n})\in \C^n$, restricted to
the set $\C^n\setminus K_n$, satisfies
$${f_{d_1,\ldots,d_n}}_*(\pi_1(\C^n\setminus K_n, a_f))=H(d_1,\ldots, d_n).$$
This implies that every proper polynomial (resp. holomorphic)
mapping with $D(f)=K_n$ and $\mu(f)=k$ has to be equivalent to one
of the mappings $f_{d_1,\ldots, d_n}.$  However, we have to
exclude mappings for which some of $d_i$ is equal to one, because
in this case $D(f)$ does not contain the hyperplane $x_i=0.$
$\square$

\vspace{5mm} \noindent {\it Proof of Corollary \ref{wn3'}}.   The
mapping $f$ induces a topological covering $f: P_f=\C^n\setminus
f^{-1}(K_n)\to R=\C^n\setminus K_n$ of degree $\mu(f).$ Choose a
point $a\in R$ and a point $a_f\in f^{-1}(a).$ Let $H(d_1,\ldots,
d_n)$ be the group $f_*(\pi_1(P_f, a_f))\subset \pi_1(R, a).$

As in the proof of Corollary \ref{wn3} there is a mapping
$f_{d_1,\ldots,d_n} : \C^n \ni ( x_1,\ldots, x_n)\mapsto
(x_1^{d_1},\ldots, x_n^{d_n})\in \C^n$ (and a point $b\in
f_{d_1,\ldots,d_n}^{-1}(a)$), which when restricted to the set
$\C^n\setminus K_n$, satisfies
${f_{d_1,\ldots,d_n}}_*(\pi_1(\C^n\setminus K_n, b))=H(d_1,\ldots,
d_n).$

As before, we have a biholomorphic mapping $\phi: P_f \to
P_{f_{d_1,\ldots,d_n}}$ such that $f=f_{d_1,\ldots,d_n}\circ
\phi.$ Since the mapping $f_{d_1,\ldots,d_n}$ is proper, we can
extend  $\phi$ to a unique birational mapping $\Phi: \C^n\to \C^n$
and still  have $f=f_{d_1,\ldots,d_n}\circ \Phi.$ However, the
mapping $f$ is quasi-finite, hence so is $\Phi.$ Now by the
Zariski Main Theorem the mapping $\Phi$ is a regular automorphism.
$\square$

\vspace{5mm} \noindent {\it Proof of Corollary \ref{wn4}}. In
dimension one the hypersurface $X\subset \C$ which contains one or
two points has only one  embedding in $\C$ (up to equivalence). In
dimension two by the famous Abhyankar-Moh-Suzuki Theorem the line
$\Bbb A^1(\C)$ has only one embedding in $\C^2.$ Finally the
$n-$cross $K_n=\{ x\in \C^n : \prod^n_{i=1} x_i=0\}$ has only one
 embedding into $\C^n$ (see \cite{jel}). By the remarks above, we can
choose the polynomial automorphism $\Psi: \C^n\to\C^n$ such that
$D(\Psi\circ f)$ is equal to:

a)   the point $0$ or  the set $\{0,1\}$ respectively,

b)  the line $\{ x=0\}\subset \C^2$,

c)  the $n$-cross $K_n\subset \C^n.$

 \noindent Now it is enough to apply Corollary \ref{wn} or Corollary   \ref{wn3}.$\square$

\section{Proofs of Theorem \ref{glhol} and Corollaries
\ref{wnhol}-\ref{wn2hol}}

\noindent{\it Proof of Theorem \ref{glhol}.} Let $B(0,\rho)$ be a
small ball around the origin.  It is well known that the
fundamental group of the space $U_\rho=B(0,\rho)\setminus V$ does
not depend on sufficiently small $\rho$, and this group is
finitely generated.  Moreover, it is easy to see that for small
$\rho_1<\rho$ the space $U_{\rho_1}$ is a deformation retract of
$U_\rho.$

Every proper mapping $f: (\C^n,0)\to (\C^n,0)$ with $D(f)=(V,0)$
and $\mu(f)=k$ induces a topological covering $f:
f^{-1}(U_\rho)=P_{f,\rho}\to U_\rho$. Take a point $a\in U_\rho$
and let $a_f\in f^{-1}(a).$ We have an induced homomorphism
$$f_* :\pi_1(P_{f,\rho}, a_f)\to \pi_1(U_\rho,a).$$
Denote $H_{f,\rho}=f_*(\pi_1(P_{f,\rho}, a_f))$ and
$G=\pi_1(U_\rho,a).$ Note that for small $\rho_1<\rho$ the space
$P_{f,\rho_1}$ is a deformation retract of $P_{f,\rho}.$ In
particular the group $H_{f,\rho}$ does not depend on $\rho$; we
will denote it by $H_f.$ We also write $P_f$ instead of
$P_{f,\rho},$ and $U$ instead of $U_\rho.$

Hence $[G:H_f]=k.$ By Lemma \ref{hall}  there are only a finite
number subgroups $H_1,\ldots, H_r\subset G$ with index $k.$ Choose
proper holomorphic mappings $f_i : (\C^n,0)\to (\C^n,0)$ such that
$H_{f_i}=H_i$ (of course only if such a $f_i$   exists). We show
that every proper holomorphic mapping $f: (\C^n,0)\to (\C^n,0)$
such that $D(f)=(V,0)$ and $\mu(f)=k$ is equivalent to one of the
$f_i.$

Indeed, let $H_f=H_{f_i}.$ We show that $f$ is equivalent to
$f_i.$ Let us consider two coverings $f: (P_f, a_f)\to (U,a)$ and
$f_i: (P_{f_i}, a_{f_i})\to (U,a).$ Since $f_*(\pi_1(P_f,
a_f))={f_i}_*(\pi_1(P_{f_i}, a_{f_i}))$ we can lift the covering
$f$ to a homeomorphism $\phi: P_f\to P_{f_i}$ such that the
following diagram commutes:
\begin{center}
\begin{picture}(240,120)(-40,40)
\put(180,160){\makebox(0,0)[tl]{$(P_{f_i}, a_{f_i})$}}
\put(20,40){\makebox(0,0)[tl]{$(P_f,a_f)$}}
\put(180,40){\makebox(0,0)[tl]{$(U,a)$}}
\put(190,100){\makebox(0,0)[tl]{$f_i$}}
\put(95,50){\makebox(0,0)[tl]{$f$}}
\put(80,100){\makebox(0,0)[tl]{$\phi$}}
\put(65,35){\vector(1,0){110}} \put(40,45){\vector(4,3){130}}
\put(183,145){\vector(0,-1){100}}
\end{picture}
\end{center}
\vspace{15mm}

\noindent Note that $\phi: P_f\to P_{f_i}$ is a single-valued
branch of the multi-valued holomorphic mapping $f_i^{-1}\circ f.$
In particular $\phi$ is a holomorphic mapping. Further, the
mapping $\phi$ is  bounded on $P_f.$ Hence by the Riemann
Extension Theorem $\phi$ can be extended to a holomorphic mapping
$\Phi: f^{-1}(B(0,\rho))\to f_i^{-1}(B(0,\rho))$. Moreover  we
still have $f=f_i\circ \Phi.$

In a similar way the mapping $\Psi: f_i^{-1}(B(0,\rho))\to
f^{-1}(B(0,\rho))$ determined by $\phi^{-1}$ is holomorphic. It is
easy to see that $\Psi=\Phi^{-1}$, hence $\Phi$ is a
biholomorphism. Consequently, $f$ is equivalent to $f_i.$
$\square$

\vspace{5mm} \noindent  {\it Proof of Corollary \ref{wnhol}.} Let
$U_\rho=B(0,\rho)\setminus V.$
 It is easy to see that $U_\rho$ is a deformation retract of
 $\C^n\setminus V.$ In particular  $G=\Bbb Z.$
 Now the rest of the proof of the first part
 of Corollary \ref{wnhol} is the same as in the proof of Corollary
 \ref{wn} so we skip it.

Now assume that  $f:(\C^n,0) \to (\C^n,0)$ is a proper holomorphic
mapping with a smooth discriminant $D(f).$ Let $\Psi:(\C^n,0)\to
(\C^n,0)$ be a biholomorphism such that $\Psi(D(f))=(V,0),$ where
$V=\{x: x_1=0\}.$ Hence $D(\Psi\circ f)=(V,0)$ and by the first
part $\Psi\circ f$ is equivalent to $g$.

\vspace{5mm}

\noindent {\it Proof of Corollary \ref{wn2hol}.} Again  $U_\rho$
is a deformation retract of
 $\C^n\setminus K_n.$ In particular $G=\Bbb
Z \oplus \ldots \oplus \Bbb Z$ ($n$ times). Now the rest of the
proof of the first part
 of Corollary \ref{wn2hol} is the same as in the proof of Corollary
 \ref{wn3} so we skip it.

Assume now that $D(f)$ is biholomorphic to $K_n$, and let $\sigma:
(K_n,0)\to (D(f),0)$ be this isomorphism. Let $\Lambda_i=\sigma(\{
x: x_i=0\})$ and let $h_i=0$ be a local reduced equation of
$\Lambda_i.$ Take $\Psi=(h_1,\ldots, h_n).$ Then $\Psi$ is a local
biholomorphism (all $\Lambda_i$ intersect transversally at $0$ !)
and it is easy to see that $D(\Psi\circ f)=(K_n,0).$ Now we can
apply the first part of the Corollary. $\square$

\section{Proofs of Theorems \ref{lamhol} and  \ref{lam}}

\noindent {\it Proof of Theorem \ref{lamhol}.} Let $f:(\C^n,0)\to
(\C^n,0)$ be a proper holomorphic mapping of degree two.  Let
$B(0,\rho)$ be a small ball around the origin and
$U_\rho=B(0,\rho)\setminus V.$ Take a point $a\in U_\rho$ and let
$b_1, b_2\in f^{-1}(a).$ We have the induced homomorphisms
$$f_* :\pi_1(P_{f}, b_i)\to \pi_1(U_\rho,a), \ i=1,2.$$
Denote $H_i=f_*(\pi_1(P_{f}, b_i))$ for $i=1,2$ and denote as
above $G=\pi_1(U_\rho,a).$

Since $[G:H_i]=2$ we see that each $H_i$ is a are normal subgroup
of $G.$ From topology we know that $H_1$ is conjugate to $H_2$,
and consequently $H_1=H_2.$ Let us consider two coverings $f:(P_f,
b_1)\to (U,a)$ and $f:(P_{f}, b_2)\to (U,a).$ Since
$f_*(\pi_1(P_f, b_1))={f}_*(\pi_1(P_{f}, b_2))$ we can lift the
former covering  to a homeomorphism $\phi: (P_f, b_1)\to (P_{f},
b_2)$ such that the following diagram commutes:

\begin{center}
\begin{picture}(240,120)(-40,40)
\put(180,160){\makebox(0,0)[tl]{$(P_{f}, b_2)$}}
\put(20,40){\makebox(0,0)[tl]{$(P_f, b_1)$}}
\put(180,40){\makebox(0,0)[tl]{$(U,a)$}}
\put(190,100){\makebox(0,0)[tl]{$f$}}
\put(95,50){\makebox(0,0)[tl]{$f$}}
\put(80,100){\makebox(0,0)[tl]{$\phi$}}
\put(65,35){\vector(1,0){110}} \put(40,45){\vector(4,3){130}}
\put(183,145){\vector(0,-1){100}}
\end{picture}
\end{center}
\vspace{15mm}

\noindent Put $R=f^{-1}(B(0,\rho)).$ As above,  $\phi$ can be
extended to a biholomorphism $\Phi: R\to R$ of order two with
$f=f\circ \Phi.$ Since $\Phi$ permutes the fibers of $f$, we see
that all critical points of $f$ are fixed points of $\Phi.$ In
particular $\Phi(0)=0.$ Now by the Cartan Theorem (see \cite{car})
we know that for $\rho$ small enough, there is a  biholomorphism
$\Sigma: (R,0)\to (R,0)$ such that $\Sigma^{-1}\circ \Phi\circ
\Sigma:=L$ is a linear automorphism. Now we need the following:

\begin{lem}\label{lemat}
Let $L:\C^n\to \C^n$ be a linear mapping of order two and assume
that the set of fixed points of $L$ is a hyperplane $W$. Then in
some coordinates, $$L(x_1, x_2, \ldots, x_n)=(-x_1, x_2,\ldots,
x_n).$$
\end{lem}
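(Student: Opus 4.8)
The plan is to use the elementary fact that a linear involution over a field of characteristic zero is diagonalizable with eigenvalues in $\{1,-1\}$. First I would observe that $L^{2}=\mathrm{id}$ forces the minimal polynomial of $L$ to divide $t^{2}-1=(t-1)(t+1)$; since this polynomial has simple roots, $L$ is diagonalizable and $\C^{n}=\ker(L-\mathrm{id})\oplus\ker(L+\mathrm{id})$. Concretely, the decomposition is realized by the identity $v=\frac{1}{2}(v+Lv)+\frac{1}{2}(v-Lv)$, where the first summand lies in $\ker(L-\mathrm{id})$, the second in $\ker(L+\mathrm{id})$, and the two kernels meet only in $0$ (if $Lv=v$ and $Lv=-v$ then $2v=0$).

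Next I would identify the two summands with the data in the statement. By definition the set of fixed points of $L$ is $\ker(L-\mathrm{id})$, which by hypothesis equals the hyperplane $W$; hence $\dim\ker(L-\mathrm{id})=n-1$. Since a hyperplane is a \emph{proper} subspace of $\C^{n}$, we have $L\neq\mathrm{id}$, so $\ker(L+\mathrm{id})\neq 0$; combined with the dimension count from the direct sum decomposition this gives $\dim\ker(L+\mathrm{id})=1$. Now choose a nonzero vector $e_{1}\in\ker(L+\mathrm{id})$ together with a basis $e_{2},\dots,e_{n}$ of $W=\ker(L-\mathrm{id})$. Then $(e_{1},e_{2},\dots,e_{n})$ is a basis of $\C^{n}$ in which $Le_{1}=-e_{1}$ and $Le_{i}=e_{i}$ for $i\geq 2$; that is, in the corresponding linear coordinates $L(x_{1},x_{2},\dots,x_{n})=(-x_{1},x_{2},\dots,x_{n})$, as required.

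There is essentially no obstacle in this argument; the only step that deserves an explicit remark is the verification that the $(-1)$-eigenspace is nonzero, which is exactly where the hypothesis \textquotedblleft the fixed locus is a hyperplane\textquotedblright\ (and not all of $\C^{n}$) is used, and the dimension bookkeeping that pins this eigenspace down to dimension one.
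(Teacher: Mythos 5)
Your proof is correct, and it takes a genuinely different (and in fact cleaner) route than the paper's. The paper completes a basis $e_2,\dots,e_n$ of $W$ to a basis of $\C^n$, writes $L(e_1)=\sum_{i=1}^n a_ie_i$, and argues that finite order forces $a_i=0$ for $i>1$ while $\det L=\pm1$ and $L\neq\mathrm{id}$ force $a_1=-1$. You instead use the semisimplicity of an involution in characteristic zero (the minimal polynomial divides $t^2-1$, which has simple roots), realize the decomposition $\C^n=\ker(L-\mathrm{id})\oplus\ker(L+\mathrm{id})$ explicitly via $v=\tfrac12(v+Lv)+\tfrac12(v-Lv)$, and finish by dimension counting. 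Your version is arguably the safer one: in the direct computation, $L^2=\mathrm{id}$ yields $a_1^2=1$ and $(a_1+1)a_i=0$ for $i>1$, so when $a_1=-1$ the off-diagonal coefficients need not vanish in the initially chosen basis (for instance $L(e_1)=-e_1+e_2$, $L(e_2)=e_2$ is an involution whose fixed locus is exactly the hyperplane spanned by $e_2$); one must still replace $e_1$ by $e_1-\tfrac12\sum_{i>1}a_ie_i$, which is precisely the projection onto the $(-1)$-eigenspace that your decomposition produces automatically. You also correctly isolate where the hypothesis that $W$ is a \emph{proper} hyperplane is used, namely to guarantee that the $(-1)$-eigenspace is nonzero. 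Both arguments are elementary and give the same normal form.
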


\begin{proof}
Take a basis $e_1,\ldots, e_n$ in $\C^n$ such that $e_2,\ldots,
e_{n}$ span the hyperplane $W.$ Hence $L(e_i)=e_i$ for $i>1$ and
$L(e_1)=\sum^n_{i=1} a_i e_i.$ In particular $L(x_1 e_1+\ldots+x_n
e_n)=(a_1x_1)e_1+(x_2+a_2x_1)e_2+ \ldots+(x_{n}+a_{n}x_1)e_{n}.$
Since $L$ has  finite order, we have $a_i=0$ for $i>1.$ In
particular $L(x_1,\ldots, x_n)=(a_1 x_1, x_2,\ldots, x_{n}).$
However det $L=\pm 1$, i.e., $a_1=\pm 1.$ Of course $a_1\not=1.$
\end{proof}

Let $L$ be a linear automorphism as in Lemma \ref{lemat} and put
$G=(id, L).$ The group $G$ acts on $\C^n$ and we have
$\C[x_1,\ldots, x_n]^G=\C[x_1^2, x_2,\ldots, x_n].$ In particular
$$\C^n/G=Spec(\C[x_1,\ldots, x_n]^G)=Spec(\C[x_1^2, x_2,\ldots,
x_n])=\C^n.$$ Under this identification the canonical mapping
$\pi: \C^n\to \C^n/G$ coincides with the mapping $\C^n\ni
(x_1,\ldots, x_n)\mapsto (x_1^2, x_2,\ldots, x_n)\in \C^n.$

Take $f'=f\circ \Sigma.$ Hence $f'$ is equivalent to $f$ and
$f'\circ L=f'.$ The mapping $f'$ induces a mapping $\tilde{f}:
R/G=\pi(R)\to B(0,\rho),$ where $f'=\tilde{f}\circ \pi.$ Since
$\tilde{f}$ has degree one it is a biholomorphism. However
$\tilde{f}^{-1}\circ f'=\pi$, so the mapping $f'$ (hence also
$f$) is weakly equivalent to
$$\pi :(\C^n,0) \ni (x_1,\ldots, x_n)\mapsto (x_1^2, x_2,\ldots, x_n)\in (\C^n,0). \square$$

\vspace{5mm}

\noindent {\it Proof of Theorem \ref{lam}.}  By Corollary
\ref{wn2} there exists a non-trivial automorphism $\Phi :
\C^n\to\C^n$ such that $f\circ  \Phi=f.$ In particular the set of
fixed points of $\Phi$ coincide with the set of critical points of
$f.$ If the Linearization Conjecture is true, then
$\Sigma^{-1}\circ \Phi \circ \Sigma=L\in GL(n)$ for some
automorphism $\Sigma\in Aut(\C^n).$ As above we can assume that
$L(x_1, x_2, \ldots, x_n)=(-x_1, x_2, \ldots, x_n)$. Put
$f:=f\circ \Sigma.$ Now we have $f\circ L=f.$

If $G=(id, L),$ then we have $\C^n/G=\C^n$ and the canonical
mapping $\pi: \C^n\to \C^n/G$ coincides with the mapping $\C^n\ni
(x_1,\ldots, x_n)\mapsto (x_1^2, x_2,\ldots, x_n)\in \C^n.$ Since
$f$ is $G$-invariant, the mapping
$$\tilde{f}=f\circ \pi^{-1} : \C^n/G\ni (x_1,\ldots, x_n)\mapsto
f(\sqrt{x_1}, x_2,\ldots, x_n)\in \C^n$$ is well defined and
polynomial. It is easy to check that $\tilde{f}$ has degree one,
i.e., it is an automorphism. Finally $\tilde{f}^{-1}\circ f= \pi.$
$\square$

\vspace{3mm}

In particular for $n=2$ we get another proof of the Lamy Theorem
(Theorem \ref{lamy}).

\vspace{10mm}

\end{document}